\numberwithin{equation}{section}
\pgfplotsset{compat=newest}
\titleformat{\section}[block]{\large\sc\filcenter}{\thesection.}{0.5ex}{}[]
\titleformat{\subsection}[runin]{\bf}{\thesubsection.}{0.5ex}{}[.]
\newtheorem{lemma}{Lemma}[section]
\newaliascnt{proposition}{lemma}
\newaliascnt{corollary}{lemma}
\newaliascnt{theorem}{lemma}
\newtheorem{theorem}[theorem]{Theorem}
\newaliascnt{definition}{lemma}
\newtheorem{definition}[definition]{Definition}
\newaliascnt{assumption}{lemma}
\newaliascnt{example}{lemma}
\newtheorem{example}[example]{Example}
\newaliascnt{notation}{lemma}
\newaliascnt{convention}{lemma}
\newtheorem{convention}[convention]{Convention}
\newaliascnt{algorithm}{lemma}
\newaliascnt{remark}{lemma}
\newtheorem{remark}[remark]{Remark}
\theoremstyle{nonumberplain}
\newtheorem{proof}{Proof}
\newcommand{\N}{\mathds{N}}
\newcommand{\R}{\mathds{R}}
\let\RE\Re
\let\Re=\undefined
\DeclareMathOperator{\Re}{\RE e}
\let\IM\Im
\let\Im=\undefined
\DeclareMathOperator{\Im}{\IM m}
\newcommand{\norm}[1]{\left\|#1\right\|}
\newcommand{\set}[1]{\left\{#1\right\}}
\newcommand{\inner}[2]{\left<#1,#2\right>}
\newcommand{\e}{\mathrm e}
\let\ii\i
\renewcommand{\i}{\mathrm i}
\renewcommand{\d}{\,\mathrm d}
\newcommand{\ve}{\varepsilon}
\newcommand{\Ym}{Y_{N}}
\newcommand{\Xm}{X_{N}}
\newcommand{\ol}[1]{{\overline{#1}}}
\newcommand{\ulm}{\underline{m}}
\newcommand{\olm}{\overline{m}}
\title{Spectral Function Space Learning and Numerical Linear Algebra Networks for Solving Linear Inverse Problems}
\author{Andrea Aspri$^{4}$\\{\footnotesize\href{mailto:andrea.aspri@unimi.it}{andrea.aspri@unimi.it}}
\and Leon Frischauf$^{1}$ \\{\footnotesize\href{mailto:leon.frischauf@univie.ac.at}{leon.frischauf@univie.ac.at}}
\and Otmar Scherzer$^{1,2,3}$\\{\footnotesize\href{mailto:otmar.scherzer@univie.ac.at}{otmar.scherzer@univie.ac.at}}}
\date{}
\newcommand{\bx}{{\bf{x}}}
\newcommand{\obx}{\overline{\bx}}
\newcommand{\by}{{\bf{y}}}
\newcommand{\oby}{\overline{\by}}
\newcommand{\bu}{{\bf{u}}}
\newcommand{\bv}{{\bf{v}}}
\newcommand{\ba}{{\bf{a}}}
\newcommand{\oba}{\overline{\ba}}
\newcommand{\bY}{{\mathbf{Y}}}
\newcommand{\bW}{{\mathbf{W}}}
\newcommand{\bX}{{\mathbf{X}}}
\newcommand{\Id}{{\mathcal{I}}}
\begin{document}

\maketitle
\thispagestyle{empty}
\begin{center}
\parbox[t]{17em}{\footnotesize
\hspace*{-1ex}$^1$Faculty of Mathematics\\
University of Vienna\\
Oskar-Morgenstern-Platz 1\\
A-1090 Vienna, Austria}
\hfil
\parbox[t]{17em}{\footnotesize
\hspace*{-1ex}$^2$Johann Radon Institute for Computational\\
\hspace*{1em}and Applied Mathematics (RICAM)\\
Altenbergerstraße 69\\
A-4040 Linz, Austria}
\end{center}

\begin{center}
\parbox[t]{17em}{\footnotesize
\hspace*{-1ex}$^3$Christian Doppler Laboratory\\
\hspace*{1em}for Mathematical Modeling and Simulation\\
\hspace*{1em}of Next Generations of Ultrasound Devices\\
\hspace*{1em}(MaMSi)\\
Oskar-Morgenstern-Platz 1\\
A-1090 Vienna, Austria}
\hfil
\parbox[t]{17em}{\footnotesize
		\hspace*{-1ex}$^4$Università degli Studi di Milano Statale\\
		Department of Mathematics\\ ``Federigo Enriques''\\
		Via Saldini, 50\\
		20133 Milano, Italy}
\end{center}

\begin{abstract}		
	We consider solving a probably ill-conditioned linear operator equation, where the operator is not modeled by physical laws but is specified via training pairs (consisting of images and data) of the input-output relation of the operator. We derive a stable method for computing the operator, which consists of first a Gram-Schmidt orthonormalization of images and a principal component analysis of the data. This two-step algorithm provides a spectral decomposition of the linear operator. Moreover, we show that both Gram-Schmidt and principal component analysis can be written as a deep neural network, which relates this procedure to de-and encoder networks. Therefore, we call the two-step algorithm a linear algebra network. Finally, we provide numerical simulations showing the strategy is feasible for reconstructing spectral functions and for solving operator equations without explicitly exploiting the physical model.
\end{abstract}

\section{Introduction}
We consider solving a probably ill-conditioned {\bf linear operator equation} 
\begin{equation}\label{eq:ip}
	F \bx = \by,
\end{equation}
where $\bx \in \R^{\ulm}$ and $\by \in \R^{\olm}$. We call $\R^{\ulm}$ the {\bf image} and $\R^{\olm}$ the {\bf data space}
following the terminology of \cite{AspKorSch20,AspFriKorSch21}. The main assumption of this work is that the operator $F$ is {\bf not} modeled by physical laws but indirectly via {\bf training pairs}, $\mathcal{P}:=\set{(\bx_i,\by_i):i=1,\ldots,{N}},$ which satisfy
\begin{equation} \label{eq:itp}
	F\bx_i=\by_i \quad i=1,\ldots,{N}.
\end{equation} 
${N}$ is called the sampling size, and we denote the span of the training images and data by 
\begin{equation*}
	\Xm := \text{span}\set{\bx_i:i=1,\ldots,{N}} \subseteq \R^{\ulm}, \quad \Ym := \text{span}\set{\by_i:i=1,\ldots,{N}} \subseteq \R^{\olm}.
\end{equation*}
Without further notice, we always assume that the training images $\bx_i$ are linearly independent and that $F$ has trivial nullspace, such that the training data $\by_i$ are also linearly independent.

In this paper we study {\bf learning} the operator $F$ and its inverse by {\bf en-} and {\bf decoding}, which refer to exclusive use of training data. After learning the operator $F$, we can solve \autoref{eq:ip} for arbitrary data $\by \in \R^{\olm}$.
Operator learning is a very active field of research: There exist a variety of such methods, such as {\bf black box} strategies (see, for instance, \cite{Pap62}) for linear operator learning. For nonlinear operators {\bf deep neural network} learning can be used \cite{KovLanMis21,LanMisKar22,LanLiStu23_report,LanStu23_report}. For applications in inverse problems, see \cite{ArrMaaOktScho19,HalNgu22}. Coding is a term used in {\bf manifold learning}, which, in turn, is a basic tool in machine learning. The basic assumption there is that all potentially measured $\by$'s are elements of a {\bf low-dimensional} manifold (see, for instance, \cite{DufCamEhr24,BraRajRumWir21}). The setting of manifold learning (no operator connecting data) is represented in \autoref{fig:scheme}. 
\begin{figure}[h]
	\centering
	\includegraphics[scale=0.91]{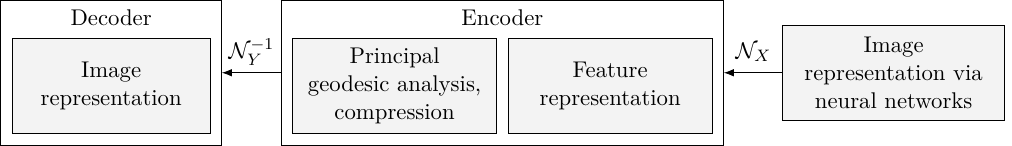}
	\vspace*{-1mm}
	\captionsetup{margin={5mm,5mm}}%
	\caption{\label{fig:scheme} Variational en- and decoding with neural networks: 
		The image data are represented via a neural network. After that they are transformed into a {\bf feature} space (with the operator $\mathcal{N}_{X}$). The features are compressed by a principal geodesic analysis. The decoder $\mathcal{N}_{Y}^{-1}$ (we assume for the sake of simplicity that the operator is invertible) transforms
		features into images. }
\end{figure}
Our strategy for operator learning is conceptually similar to manifold learning but differs by the used techniques (compare \autoref{fig:scheme} and \autoref{fig:scheme_linear}). 
\begin{figure}[h]
	\centering
	\includegraphics[scale=0.9]{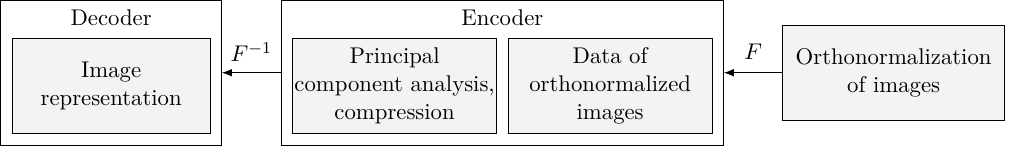}
	\vspace*{-1mm}
	\captionsetup{margin={4mm,4mm}}%
	\caption{\label{fig:scheme_linear} En- and decoding scheme for {\bf linear} operators: First the image data are orthonormalized and the according data is computed by applying $F$ - this is done by explicit calculations without making use on any physical model describing the forward operator. On the orthonormalized data a principal component analysis (PCA)  is applied, which allows to compress the data space. The decoder calculates the inverse of some given data in the compressed space.}
	\vspace*{-2mm}
\end{figure}
In this paper, we investigate coding for solving {\bf linear ill--posed problems} as outlined in \autoref{fig:scheme_linear}. We show that orthonormalization is a key tool for operator learning (this is of course not a new observation, see for instance \cite{EngHoo20}). The paper is based on the following two truly interesting observations:  
\begin{enumerate}
	\item \autoref{fig:scheme_linear} determines the singular vectors of $F$ (see \autoref{th:proc}). In other words, the choice of the training pairs only influences which spectral vectors and values are determined. Related to this result are regularization issues: In \cite{AspKorSch20,AspFriKorSch21}, we showed that orthonormalization of training data or training images (in a separate manner) can be used to stably determine an operator if the orthonormalization produces a basis that is close to the singular vectors. There is an anti-symmetry in the two approaches, which is overcome in this paper. In this context, regularization and stability analysis should not be confused. For instance, as used here, Gram-Schmidt orthonormalization is unstable with an increased number of expert pairs. However, regularization results apply if the pre-processing orthonormalization methods (like Gram-Schmidt) have been implemented in a stable manner. In other words, the truncated singular value decompositions (SVD) is a stable regularization method (presuming that the singular functions have been calculated exactly).
	\item Moreover, we show that each building block of \autoref{fig:scheme_linear} (in particular orthonormalization)can be expressed via a customized {\bf linear algebra network} (see \autoref{fig:zymlk} in \autoref{sec:ortho}). The term customized refers that the parameters in the neural network are given by the algorithm and do not need to be optimized.
\item Finally, we present some numerical experiments on learning the Radon operator (source code \cite{data-learning-op-g-66893bab}). For this operator, the singular values are explicitly known (see \cite{Dav81,Nat01}). Therefore, we can compare the computed singular values from training data with the analytical ones. See \autoref{se:numerics}. Although it is theoretically possible to recover singular vectors, practical handicaps are due to instabilities of the orthonormalization algorithms. 
\end{enumerate}

\section{Encoding of linear operators} \label{sec:encoding} 
The encoder from \autoref{fig:scheme_linear} consists of two steps:
\begin{enumerate}
	\item Calculating orthonormalized images.
	\item Computing a principal component of data of orthormalized images ($\oby_j:=F(\obx_j), j=1,\ldots,N$). 
\end{enumerate}
We recall that according to our general assumptions, training images and data, respectively, and thus ${N} \leq \set{\ulm,\olm}$. This will be assumed in the course of this paper.	
	
\subsection{Pre-processing - orthonormalization} \label{sec:ortho}
We review the Gram-Schmidt orthonormalization method (see for instance \cite{GolVan96}) and show that it can be expressed as a deep neural network (NN). In a second step we show that also QR decomposition, realizing PCA, is representable as a deep NN. In other words, they are {\bf customized} because the parameters do not need to be optimized. Therefore, we call such networks {\bf linear algebra networks}. 

\subsubsection*{Gram-Schmidt as a deep network} 

We start with the training images $\bx_1,\ldots,\bx_{N} \in \R^{\ulm}$ and orthonormalize them iteratively.
Let 
\begin{equation*}\begin{aligned}
		\sigma : \R^{\ulm} \backslash \set{0} & \to \R^{\ulm}.\\
		\bx &\mapsto \frac{\bx}{\norm{\bx}} 
	\end{aligned}
\end{equation*}
Then Gram-Schmidt looks as follows:
\begin{equation} \label{eq:gs}
	\begin{aligned} 
		\obx_j &:= \sigma \biggl(\underbrace{
			{\bx}_j - \sum_{i=1}^{j-1}\inner{{\bx}_j}{\obx_i} \obx_i
		}_{=:\rho({\bx}_j)} \biggr) \text{ for all } j=1,\ldots,{N}.
	\end{aligned}
\end{equation}
\begin{figure}[h]
	\begin{center}
		\includegraphics{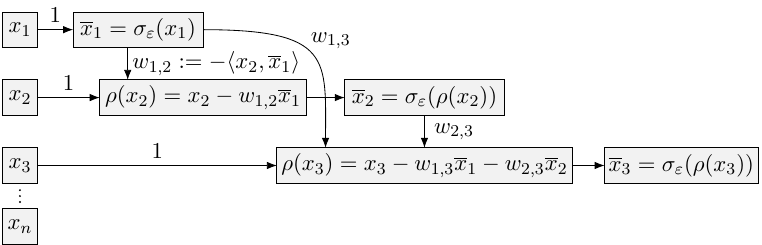}
		\caption{\label{fig:zymlk} The neural network structure of the Gram-Schmidt orthonormalization. We use here the function $\sigma_\ve$ from \autoref{eq:sigmave} as activation function. }
	\end{center}
\end{figure}
\begin{remark}
	In machine learning context instead of the high-dimensional signum function $\sigma$ the smooth approximation
	\begin{equation} \label{eq:sigmave} 
		\begin{aligned}
		 \sigma_\ve:\R^{\olm} &\to \R^{\olm}\\ 
		                  \bx         &\mapsto \frac{\bx}{\sqrt{\norm{\bx}^2+\ve^2}}
		\end{aligned} 
	\end{equation}
    is used and called {\bf activation function}. Replacing in \autoref{fig:zymlk} $\sigma$ by $\sigma_\ve$ we obtain an ${N}$-layer neural network, which we call {\bf Gram-Schmidt network}.
	We note, however, that this is not a standard neural network (see, for instance, \cite{Bis95}) because there $\hat{\sigma}_\ve:\R^1 \to \R^1$  is evaluated for each component of $\R^N$. Such network cannot be used here because we must ensure that every $\rho(\bx_j)$, $j=1,\ldots,{N}$ is a linear combination of $\bx_i$, $i=1,\ldots,j-1$. There are efficient alternatives to Gram-Schmidt, which are, for instance, block based (see, for instance, \cite {CarLunRozTho22}), which can be reinterpreted again as deep neural networks. Gram-Schmidt breaks down if and only if one of the vectors $\rho(\bx_j)$ becomes zero, or in other words, if the vectors $\bx_j$, $j=1,\ldots,{N}$ become linearly dependent. We excluded this by a general assumption that all images are linearly independent. With a smooth approximation $\sigma_\ve$, the linear dependence of the training images does not result in a break down of the algorithm described in \autoref{fig:zymlk}. In fact, all available training pairs can be used. However, the linear dependence assumption cannot be avoided for the analysis.
\end{remark}

\subsection{Data from orthonormalized images} 
If the nullspace of $F$ is trivial, then from \autoref{eq:gs} it follows immediately  that (see \autoref{fig:zymlk}) that
\begin{equation} \label{eq:gsi}
	\begin{aligned} 
		\oby_j := F\obx_j := F \left( \sigma (\rho({\bx}_j)) \right) = \frac{1}{\norm{\rho({\bx}_j)}}  F \left( \rho({\bx}_j) \right)		                                                         
		\text{ for all } j=1,\ldots,{N}.
	\end{aligned}
\end{equation}
This means that the data of orthonormalized images can be computed without explicit knowledge of $F$. Note that $\rho({\bx}_j)$ is a linear combination of ${\bx}_i$, $i=1,\ldots,j$.

With the knowledge of $\oby_j$, $j=1,\ldots,{N}$, we can compute data for every image $\bx \in \Xm$. Since 
	\begin{equation*}
		\bx = \sum_{i=1}^N \inner{\bx}{\obx_i} \obx_i, 
	\end{equation*}
    it follows that 
	\begin{equation*}
		\by = F	\bx = \sum_{i=1}^N \inner{\bx}{\obx_i} F \obx_i = \sum_{i=1}^N \inner{\bx}{\obx_i} \oby_i.
	\end{equation*}

\begin{remark} If instead of an exact Gram-Schmidt the deep network Gram-Schmidt from \autoref{fig:zymlk} is used, we can apply the approximate formula with $\sigma_\ve$ from \autoref{eq:sigmave} is used
	\begin{equation} \label{eq:gsi2}
		\begin{aligned} 
			\oby_j := F\obx_j \approx F \left( \sigma_\ve (\rho({\bx}_j)) \right) =: \frac{1}{\norm{\sigma_\ve \left( \rho({\bx}_j) \right)}}  F \left( \rho({\bx}_j) \right)		                                                         
			\text{ for all } j=1,\ldots,{N}.
		\end{aligned}
	\end{equation}
\end{remark}

\subsection*{Principal component analysis (PCA)}
In the following, we analyze under which assumptions the PCA is stable. Starting point of this discussion is {\bf Seidman's veto} \cite{Sei80}, which states that regularization by projection is in general not a regularization method. This means that by projecting onto $X_S$ inversion of $F$ on the range of $F$ of $X_S$ is not stable. On the other hand if $F$ is inverted on $Y_S$, then it is in fact stable (see again \cite{Sei80}). However, in the context of machine learning this requires to collect training data of $F^*\by_i$, $i=1,\ldots,S$, which is in general not available. As a conclusion from \cite{Sei80} we find that $F$ can be stably inverted by projection if $X_S$ is the space of the singular vectors corresponding to the largest $S$ singular values (see for instance \cite{EngHanNeu96}). We emphasize again, that here stability refers to inversion of the truncated SVD after orthonormalization. Now, we show how this singular values can be calculated with machine learning techniques:

\begin{theorem}[Spectral theory: See Theorem 2.5.2 in \cite{GolVan96}] \label{th:svd} Let the operator $F: \R^{\ulm} \to \R^{\olm}$ be linear. Then for every $\bx \in \R^{\ulm}$ 

	\begin{equation} \label{eq:svd}
		F \bx = \sum_{j=1}^{\min \set{\ulm,\olm}} \gamma^j \inner{\bx}{\bu^j} \bv^j \text{ and } F^T \bv^j = \gamma^j \bu^j,
		F \bu^j = \gamma^j \bv^j,
	\end{equation}
	where $\bu^j \in \R^{\ulm}$, $j=1,\ldots,\ulm$ and $\bv^j \in \R^{\olm}$, $j=1,\ldots,\olm$ are orthonormal, respectively, and 
	\begin{equation*}
		0 \leq \gamma^1 \leq \gamma^2 \leq \cdots \leq \gamma^{\min \set{\ulm,\olm}}.
	\end{equation*}
	In matrix form, this identity becomes more compact:
	\begin{equation} \label{eq:svdd}
		F = {\bf V} D {\bf U}^T \text{ with } {\bf U} \in \R^{{\ulm \times \ulm}}, {\bf V} \in \R^{{\olm \times \olm}},
	\end{equation}
	where ${\bf U}$ and ${\bf V}$ are orthonormal and 
	\begin{equation*}
		\begin{aligned}
			D = \text{diag} (\gamma^1,\gamma^2, \cdots,\gamma^{\min \set{\ulm,\olm}}) \in \R^{\olm \times \ulm}.
		\end{aligned}
	\end{equation*}
\end{theorem}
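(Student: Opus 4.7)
The plan is to reduce \autoref{th:svd} to the spectral theorem for symmetric positive semi-definite matrices. First I would form the Gram matrix $F^T F \in \R^{\ulm \times \ulm}$, which is symmetric and positive semi-definite. The spectral theorem then yields an orthonormal basis $\bu^1,\ldots,\bu^{\ulm}$ of $\R^{\ulm}$ with $F^T F \bu^j = \lambda^j \bu^j$ and $\lambda^j \geq 0$. After reordering so that $0 \leq \lambda^1 \leq \cdots \leq \lambda^{\ulm}$, I set $\gamma^j := \sqrt{\lambda^j}$ (with the convention $\gamma^j := 0$ padded as needed so that indices up to $\min\set{\ulm,\olm}$ are defined).

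Next I would construct the left singular vectors. For each index with $\gamma^j > 0$, define $\bv^j := (\gamma^j)^{-1} F \bu^j$; using $F^T F \bu^j = (\gamma^j)^2 \bu^j$ one verifies
\begin{equation*}
\inner{\bv^i}{\bv^j} = \frac{1}{\gamma^i \gamma^j}\inner{\bu^i}{F^T F \bu^j} = \delta_{ij},
\end{equation*}
so the $\bv^j$'s form an orthonormal family in $\R^{\olm}$. For $j$ with $\gamma^j = 0$, the identity $\norm{F \bu^j}^2 = \inner{\bu^j}{F^T F \bu^j} = 0$ forces $F \bu^j = 0$, and the corresponding $\bv^j$ is not yet determined. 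Since the orthogonal complement of the range of $F$ is $\ker(F^T)$ --- which follows at once from the adjunction $\inner{F \bx}{\bw} = \inner{\bx}{F^T \bw}$ --- I extend the previously defined $\bv^j$'s by an orthonormal basis of $\ker(F^T)$ into a full orthonormal basis of $\R^{\olm}$.

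Then I would verify the two identities in \eqref{eq:svd}. The relation $F \bu^j = \gamma^j \bv^j$ holds by construction when $\gamma^j > 0$ and trivially when $\gamma^j = 0$. For $F^T \bv^j = \gamma^j \bu^j$, one has $F^T \bv^j = (\gamma^j)^{-1} F^T F \bu^j = \gamma^j \bu^j$ when $\gamma^j > 0$, while any $\bv^j$ taken from $\ker(F^T)$ annihilates both sides. Expanding $\bx \in \R^{\ulm}$ as $\bx = \sum_j \inner{\bx}{\bu^j}\bu^j$ in the basis $(\bu^j)$ and applying $F$ termwise gives the series representation. The matrix form \eqref{eq:svdd} follows by collecting the $\bu^j$ as columns of ${\bf U}$, the $\bv^j$ as columns of ${\bf V}$, and writing the relations $F\bu^j = \gamma^j \bv^j$ jointly as $F{\bf U} = {\bf V} D$ with $D \in \R^{\olm \times \ulm}$ rectangular-diagonal, then right-multiplying by ${\bf U}^T$.

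There is no essential obstacle --- this is a classical textbook result and my proposal is the standard route via $F^T F$. The only points needing care are the degenerate indices $\gamma^j = 0$, where the left singular vectors must be chosen inside $\ker(F^T)$ rather than from the normalization formula, and the bookkeeping when $\ulm \neq \olm$, so that $D$ is rectangular and the sum in \eqref{eq:svd} truncates at $\min\set{\ulm,\olm}$.
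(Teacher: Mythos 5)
Your proposal is correct, and it is worth noting that the paper itself offers no proof of this statement at all --- it is quoted with a citation to Theorem 2.5.2 of Golub--Van Loan, so there is nothing internal to compare against. Your route (spectral theorem for the symmetric positive semi-definite matrix $F^TF$, then $\bv^j := (\gamma^j)^{-1}F\bu^j$ for $\gamma^j>0$ and completion by an orthonormal basis of $\ker(F^T)=\mathcal{R}(F)^\bot$) is the standard self-contained argument; the cited reference actually proceeds differently, by extracting a maximizing pair $\norm{F\bx}=\norm{F}_2$, building orthogonal matrices with these as first columns, and inducting on the dimension. Both are classical; yours has the advantage of producing the eigenvector characterization of the $\bu^j$ directly, which is what the rest of the paper (the PCA of $A=\ol{\bY}\,\ol{\bY}^T$ in Theorem 2.6) actually uses.

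One small bookkeeping point, which is really a defect of the statement you are proving rather than of your argument: applying $F$ termwise to $\bx=\sum_{j=1}^{\ulm}\inner{\bx}{\bu^j}\bu^j$ gives a sum over all $\ulm$ indices, and truncating it at $\min\set{\ulm,\olm}$ is only legitimate if the discarded indices carry $\gamma^j=0$. When $\ulm>\olm$ at least $\ulm-\olm$ singular values vanish, but with the \emph{increasing} ordering $\gamma^1\leq\cdots\leq\gamma^{\ulm}$ adopted in the theorem these zeros sit at the \emph{beginning} of the list, so the first $\min\set{\ulm,\olm}$ indices are exactly the wrong ones to keep. Your proof goes through verbatim once the nonzero singular values are indexed among the first $\min\set{\ulm,\olm}$ positions (e.g.\ decreasing order, as in Golub--Van Loan); your ``padding'' remark is also never needed, since $F^TF$ always has $\ulm\geq\min\set{\ulm,\olm}$ eigenvalues --- the issue is truncation, not extension.
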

In the following, we group the spectral values: 
\begin{definition} Let $F$ be linear with trivial nullspace with spectral decomposition as in \autoref{eq:svd}. We denote by 
	\begin{equation}
		\Gamma := \set{\gamma^j : j =1,\ldots, \min\set{\ulm,\olm}} = \set{\hat{\gamma}^k : k=1,\ldots,\hat{m}} 
	\end{equation}
    the set of distinct singular values. Because $F$ is assumed to have trivial nullspace $\gamma^j >0$ for all $j =1,\ldots, \min\set{\ulm,\olm}$. Moreover, we associate to each multiple singular value the associated singular vectors:
	For every $k=\set{1,\ldots,\hat{m}}$ let 
	\begin{equation}\label{eq:Ek}
		E^k := \text{span} \set{\bv^j : \gamma^j = \hat{\gamma}^k, j=1,\ldots, \min \set{\ulm,\olm}}.
	\end{equation}
\end{definition}
Now, we apply the spectral theory to show that the proposed decoding algorithm is stable. 
There, we make some general notation: 
\begin{definition} Let $\obx_i$, $i=1,\ldots,N$ be the orthonormalized training images and $\oby_i$, $i=1,\ldots,N$ the according images as defined in \autoref{eq:gs} and \autoref{eq:gsi}, respectively. We denote:
	\begin{equation}\label{eq:pcaa}
		{\ol \bX} := (\obx_1,\ldots,\obx_{N}) \in \R^{{\ulm \times {N}}}, \quad 
		{\ol \bY} = (\oby_1,\ldots,\oby_{N}) \in \R^{{\olm \times {N}}} \text{ and } A:={\ol \bY} {\ol \bY}^T \in \R^{{\olm \times \olm}}.
	\end{equation} 
Note that by our general assumptions, the rank of each of the three matrices is always $N \leq \olm$.
\end{definition}
With this result, we can state the main result of this paper:
\begin{theorem} \label{th:proc} Let $\psi_j \in \R^{\olm}$, $j=1,\ldots,{N}$ be a non-zero eigenvector of $A$, then 
	\begin{enumerate}
		\item there exists $k \in \set{1,\ldots,\hat{m}}$ such that $\psi_j \in E^{k}$. 
		\item This, in particular, means that if a non-zero eigenvector of $A$ has multiplicity one, then it equals a spectral vector of $F$.
	\end{enumerate} 
\end{theorem}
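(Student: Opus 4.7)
The plan is to rewrite $A$ through the SVD of $F$ and then read off the eigenvalue equation blockwise along the distinct singular values.

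First, because the columns of ${\ol \bX}$ are orthonormal, ${\ol \bX}\,{\ol \bX}^T$ equals the orthogonal projection $P$ onto $\Xm$; combined with ${\ol \bY}=F\,{\ol \bX}$ (see \autoref{eq:gsi}) this yields
$$A \;=\; {\ol \bY}\,{\ol \bY}^T \;=\; F\,P\,F^T.$$
In particular $A$ is symmetric and positive semidefinite, its range is contained in $\Ym=F(\Xm)$, and any eigenvector $\psi_j$ of $A$ with a nonzero eigenvalue $\lambda$ already lies in $\Ym$, so one can write $\psi_j = F\xi$ for a unique $\xi\in\Xm$.

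Second, substituting this representation into $A\psi_j=\lambda\psi_j$ and using $\ker F=\{0\}$ to cancel one copy of $F$ gives the compressed eigenvalue equation $P F^T F\,\xi = \lambda\,\xi$ on $\Xm$. Decomposing $\psi_j=\sum_{k=1}^{\hat m}\psi^k$ with $\psi^k\in E^k$ along the orthogonal eigenspace decomposition of $FF^T$ provided by \autoref{th:svd}, and using $FF^T\psi_j = \sum_k (\hat\gamma^k)^2\,\psi^k$, statement (i) reduces to showing that exactly one block $\psi^k$ is nonzero. The cleanest route is to prove that $F^T\psi_j\in\Xm$; for then $P F^T\psi_j = F^T\psi_j$ and the eigenvalue equation collapses to $FF^T\psi_j=\lambda\psi_j$, so $\psi_j$ is a genuine eigenvector of $FF^T$ with $\lambda=(\hat\gamma^k)^2$ for a single $k$, which is precisely (i). Item (ii) is then immediate: if $\hat\gamma^k$ has multiplicity one, \autoref{eq:Ek} shows that $E^k$ is one-dimensional, and $\psi_j$ must be a scalar multiple of the corresponding singular vector.

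The main technical obstacle is the passage from the compressed equation $PF^TF\,\xi=\lambda\,\xi$ on $\Xm$ to the unrestricted one $F^TF\,\xi=\lambda\,\xi$ on $\R^{\ulm}$, i.e., establishing $F^T\psi_j\in\Xm$. This is where the geometry of $\Xm$ relative to the singular basis of $F$ enters, and it is where I would expect to exploit the distinctness of the values $\hat\gamma^k$: projecting the eigenvalue identity onto each $E^k$ separately and comparing the scalar factors $(\hat\gamma^k)^2$ should force the cross terms between different eigenspaces to vanish, leaving $\psi_j$ confined to a single block $E^k$.
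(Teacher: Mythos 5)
Your setup is correct and, up to notation, follows the same route as the paper: you write $A = F P F^T$ with $P = \ol\bX\,\ol\bX^T$ the orthogonal projector onto $\Xm$, and you correctly isolate the crux, namely that the conclusion follows once $F^T\psi_j \in \Xm$, since then $A\psi_j = FF^T\psi_j = \lambda\psi_j$ and $\psi_j$ lies in a single eigenspace $E^k$ of $FF^T$. But you do not prove $F^T\psi_j\in\Xm$; you only say you ``would expect'' to get it from the distinctness of the $\hat\gamma^k$. That is a genuine gap, and it cannot be closed as stated, because the claim is false for a generic subspace $\Xm$: take $\ulm=\olm=2$, $F=\mathrm{diag}(1,2)$, $N=1$ and $\obx_1=(1,1)^T/\sqrt2$. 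Then $\oby_1=(1,2)^T/\sqrt2$, the unique eigenvector of $A=\oby_1\oby_1^T$ with nonzero eigenvalue is $(1,2)^T/\sqrt5$, which lies in neither $E^1=\mathrm{span}\set{e_1}$ nor $E^2=\mathrm{span}\set{e_2}$; correspondingly $F^T\psi_1=(1,4)^T/\sqrt5\notin\Xm$. No manipulation of the distinct values $\hat\gamma^k$ will rescue this.

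What is actually needed --- and what the paper's own proof assumes without saying so --- is that $\Xm$ is spanned by singular vectors of $F$, i.e., that $\Xm$ is invariant under $F^TF$, so that $P$ commutes with $F^TF$ and $\bU^T P\,\bU$ really is the block projector the paper calls $\ol{\Id}$. (The paper's identity $\ol\bX\,\ol\bX^T=\ol{\Id}$ only holds in a basis adapted to $\Xm$, and the subsequent step $F\,\ol\bX\,\ol\bX^T F^T=\mathbf{V} D\,\ol{\Id}\,D^T\mathbf{V}^T$ silently requires $\bU^T\ol{\Id}\,\bU=\ol{\Id}$.) Under that hypothesis your argument closes immediately: $\psi_j=F\xi$ with $\xi\in\Xm$ gives $F^T\psi_j=F^TF\xi\in\Xm$, hence $PF^T\psi_j=F^T\psi_j$ and $FF^T\psi_j=\lambda\psi_j$, so $\psi_j\in E^k$ for the unique $k$ with $(\hat\gamma^k)^2=\lambda$, and item \textit{(ii)} follows because $\dim E^k=1$ when the singular value is simple. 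You should either add this hypothesis explicitly or note that the statement fails without it; as written, your proof is incomplete at exactly the step on which the whole theorem turns.
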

\begin{proof}
	The PCA in image space calculates the eigenvalue decomposition of the covariance matrix 
	\begin{equation*} 
		A:={\ol \bY} {\ol \bY}^T = {\ol \bW} \underbrace{\begin{bmatrix} \Lambda \in \R^{{N} \times {N}} & 0\\0 & 0 \end{bmatrix}}_{=: \ol{\Lambda}} {\ol \bW}^T \in \R^{{\olm \times \olm}},
	\end{equation*}
	where ${\ol \bW} \in \R^{{\olm \times \olm}}$ is an orthonormal matrix ${\ol \bW}$ describes the 
	principal directions of the data ${\ol \bY}$, and the according entries of $D$ describe the elongation of the data in this direction. Now, since ${\ol \bY} = F {\ol \bX}$, we get
	\begin{equation} \label{eq:ortho}
		F {\ol \bX} {\ol \bX}^T F^T = {\ol \bY} {\ol \bY}^T = {\ol \bW} D {\ol \bW}^T.
	\end{equation}
	Since ${\ol \bX}$ consists of orthonormal vectors and has rank ${N}$ we have
	\begin{equation*}
		\ol{\bX} \ol{\bX}^T = \begin{bmatrix} \Id \in \R^{{N} \times {N}} & 0\\0 & 0 \end{bmatrix} =: \ol{\Id} \in \R^{\ulm \times \ulm},
	\end{equation*}
	where $\Id$ is the unitary matrix. 	Let ${\bf U} \in \R^{\ulm \times \ulm}$ be orthonormal as in \autoref{eq:svdd}, then, after reordering of columns of $\ol{\bX}$,  
	\begin{equation*}
		{\bf U}^T\ol{\bX}^T \ol{\bX} {\bf U} = {\bf U}^T \ol{\Id} {\bf U} \in \R^{\ulm \times \ulm}.
	\end{equation*}
	The matrix $\ol{\Id}{\bf U} $ projects onto the first basis vectors $\bu_i$ 
	(after reordering).
	Then we get from \autoref{eq:ortho} and \autoref{eq:svdd} the identity
	\begin{equation} \label{eq:ortho2}
		{\ol \bW} \ol{\Lambda} {\ol \bW}^T = {\ol \bY} {\ol \bY}^T =
		F {\ol \bX} {\ol \bX}^T F^T = 
		{\bf V} D \ol{\Id} D^T {\bf V}^T.		 
	\end{equation}
	
	This means that we have found two singular value decompositions of ${\ol \bY} {\ol \bY}^T$. We know that the matrices on the left and right have the same eigenspaces. This means, in particular, that if an eigenvalue has multiplicity one, then the according eigenspaces of $\bf V$ and $\bW$ match, and the the representing eigenvectors of the eigenspace are identical up to sign. This proves the second item. For eigenvalues of multiplicity higher than one, the corresponding columns of ${\bf V}$ are rotations and mirrors of ${\ol \bW}$, which proves the first item. 
\end{proof}
\begin{remark}
	\autoref{th:proc} tells us 
	\begin{enumerate}
		\item that orthonormalization of the training images is the basis of stable decoding. Problematic is, however, that orthonormalization may not provide us with an ordering of the singular vectors. For instance, there is no guarantee 
		that singular vectors can be recovered according to low indices, which usually carry most of the information.
		\item Orthonormalization is unstable. Thus, ${N}$ must be chosen small to guarantee numerical stability.
	\end{enumerate}
\end{remark}
Last, we also verify that PCA can be approximately rewritten as a network, analogously as the Gram-Schmidt algorithm in \autoref{fig:zymlk}. We note that the QR decomposition is used to compute the PCA. 

\subsection{PCA expressed as a deep network} \label{ss:pca}

This subsection shows that spectral value decomposition can be expressed as a customized deep network. Thereby, we make use of the fact that {\bf singular value decomposition (SVD)} applied to the covariance matrix $A ={\ol \bY} {\ol \bY}^T \in \R^{{\olm \times \olm}}$ from \autoref{eq:svd} gives the principal components. In other words it is a realization of the PCA. The SVD can be implemented by iterative application of the $QR$-algorithm, which is the famous {\bf Francis algorithm} \cite{Fra61} (see also \cite{Wil71,GolVan96}). Now, we show that the $QR$ can be approximated by a deep linear algebra network, and in turn approximating the $QR$ decomposition in the Francis algorithms with linear algebra networks, gives a {\bf ``deep-deep''} linear algebra network realizing the PCA.
  
Let us denote the column vectors of $A$  by ${\bf a}_i$, $i=1,\ldots,\olm$ and denote the orthonormalized vectors by ${\ol{\bf a}_i}$, $i=1,\ldots,\olm$. Moreover, let
\begin{equation*}
	\rho({\ba}_j):= {\ba}_j - \sum_{i=1}^{j-1}\inner{{\ba}_j}{\oba_i} \oba_i.
\end{equation*}
Then it follows (see \cite{Ste05}, which is the elementary exposition where the formulas are used precisely in the same way as they are used here):
\begin{equation}\label{eq:ba}
	{\bf a}_j = \norm{\rho({\bf a}_j)}\ol{{\bf a}}_j+\sum_{i=1}^{j-1} \inner{{\bf a}_j}{\ol{\bf a}_i}\ol{\bf a}_i, \quad j=1,\ldots,\olm.
\end{equation}
Writing this in matrix notation reads as follows:
\begin{equation}
	A=QR \text{ with } Q=(\ol{\bf a}_1,\ldots,\ol{\bf a}_n) \text{ and } R_{i,j} = 
	\left\{ 
	\begin{array}{rl} 
		\inner{{\bf a}_j}{\ol{\bf a}_i} & \text{ for } i=1,\ldots,j-1 \\ 
		\norm{\rho({\bf a}_j)} & \text{ for } i=j\\
		0 & \text{ for } i=j+1,\ldots,\olm
	\end{array} 
	\right. \;.
\end{equation}
In fact, it follows from \autoref{eq:ba}: 
\begin{equation}\label{eq:baa}
	\ol{{\bf a}}_j = \frac{1}{\norm{\rho({\bf a}_j)}} \left( {\bf a}_j - \sum_{i=1}^{j-1} \inner{{\bf a}_j}{\ol{\bf a}_i}
	\ol{\bf a}_i \right) = \sigma (\rho(\ba_j)), \quad j=1,\ldots,\olm,
\end{equation}
\autoref{fig:zymlk} with $\bx$ replaced by $\ba$ is the net, which determines $Q$. This shows that the matrix $Q$ from the $QR$-algorithm can be determined with a {\bf linear algebra network}.  Moreover, it has been recently observed that matrix multiplication can be implemented very efficiently via reinforcement learning or, in other words, via nets (see \cite{FawBalHuaHubRom22}). The demystification is that tensor products are prime examples of fully connected networks. We mention that already Wilkinson \cite[Sect. 35]{Wil88} stated that writing $QR$ as Gram-Schmidt orthonormalization might lead to unstable implementations. Thus, the exposition above is only used for theoretical purposes.

\begin{remark}
	In the last years PCA-networks have become popular in the machine learning community (see \cite{BhaHosKovStu20_report,TanMicRadKuhMaa24,NelStu24}). The goal there is somehow different and consists in minimizing the parameters of a network. Such networks could indeed be useful to investigate here on top, but this is beyond the scope of this paper. 
\end{remark}

\section{Image representation and decoder} \label{sec:decoderII}

We consider the decoder as represented in \autoref{fig:scheme_linear}. We calculate the least-squares approximation of $\by$ with respect to the basis $\set{\psi_l : l=1,\ldots,{N}}$ (see \autoref{th:proc}) of eigenvectors of $A$. This is given by 
\begin{equation} \label{eq:lqs}
	\by_{ls}= \sum_{l=1}^N \inner{\by}{\psi_l} \psi_l.
\end{equation}
The goal is to find an explicit representation of the {\bf decoding function} $\bx_{ls}$. That is a function with minimal norm, which satisfies 
\begin{equation}\label{eq:least}
	F \bx_{ls} = \by_{ls} 
\end{equation}
and which can be represented by the training pairs alone. 

From \autoref{th:proc} it follows that for every $\psi_l$ there exists $k(l) \in \N$ such that 
\begin{equation*} 
	 \psi_l \in E^{k(l)}.
\end{equation*}
We denote the dimension of $E^{k(l)}$ with $m(l)$. Or, in other words, the multiplicity of $\gamma_l$ is $m(l)$. 
From \autoref{eq:Ek} we know that 
$$ \psi_l \in \text{span} \set{\bv^{l(j)}: j=1,\ldots,m(l)} = E^{k(l)}.$$
In other words 
\begin{equation} \label{eq:ident}
	\psi_l = \sum_{j=1}^{m(l)} \nu_j \bv^{l(j)}.
\end{equation}
Applying \autoref{eq:svd} and \autoref{eq:Ek} then implies that 
\begin{equation*} 
	\psi_l = \sum_{j=1}^{m(l)} \nu_j \bv^{l(j)} = \frac{1}{\hat{\gamma}^{k(l)}} F \left(\sum_{j=1}^{m(l)} \nu_j \bu^{l(j)} \right).
\end{equation*}

We summarize the calculations now in the following lemma:
\begin{lemma}[Decoder] \label{le:decoder}  The minimum norm solution of \autoref{eq:least} is   
	given by 
	\begin{equation} \label{eq:decoder} \begin{aligned}
		x_{ls} = \sum_{l=1}^N \inner{\by}{\psi_l} \left(\frac{1}{\hat{\gamma}^{k(l)}}\sum_{j=1}^{m(l)} \nu_j \bu^{l(j)} \right) = \sum_{l=1}^N \frac{\inner{\by}{\psi_l}}{(\hat{\gamma}^{k(l)})^2} F^* \left(\sum_{j=1}^{m(l)} \nu_j \bv^{l(j)} \right) = \sum_{l=1}^N \frac{\inner{\by}{\psi_l}}{(\hat{\gamma}^{k(l)})^2} F^* \psi_l,
	\end{aligned} \end{equation}
    and therefore is a {\bf decoder}. 
\end{lemma}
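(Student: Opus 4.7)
The plan is to prove this by direct verification: exhibit the claimed $\bx_{ls}$, check it solves $F\bx = \by_{ls}$, then argue that under the standing assumptions it is of minimum norm.

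First I would fix $l \in \{1,\ldots,N\}$ and exploit the identification \autoref{eq:ident}, namely $\psi_l = \sum_{j=1}^{m(l)} \nu_j \bv^{l(j)}$, where each $\bv^{l(j)}$ is a right singular vector of $F$ at the singular value $\hat{\gamma}^{k(l)}$. The SVD relations in \autoref{eq:svd} give $F \bu^{l(j)} = \hat{\gamma}^{k(l)} \bv^{l(j)}$ and $F^* \bv^{l(j)} = \hat{\gamma}^{k(l)} \bu^{l(j)}$. Define
\begin{equation*}
	\bx_{ls} := \sum_{l=1}^N \inner{\by}{\psi_l} \frac{1}{\hat{\gamma}^{k(l)}} \sum_{j=1}^{m(l)} \nu_j \bu^{l(j)},
\end{equation*}
which is exactly the first expression in \autoref{eq:decoder}. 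Applying $F$ term by term and using $F \bu^{l(j)} = \hat{\gamma}^{k(l)} \bv^{l(j)}$ cancels the factor $1/\hat{\gamma}^{k(l)}$, yielding $F \bx_{ls} = \sum_l \inner{\by}{\psi_l} \sum_j \nu_j \bv^{l(j)} = \sum_l \inner{\by}{\psi_l} \psi_l = \by_{ls}$, which is \autoref{eq:least}.

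Next I would verify the two equivalent representations. From $F^* \bv^{l(j)} = \hat{\gamma}^{k(l)} \bu^{l(j)}$ and the linearity of $F^*$,
\begin{equation*}
	F^* \psi_l = \sum_{j=1}^{m(l)} \nu_j F^* \bv^{l(j)} = \hat{\gamma}^{k(l)} \sum_{j=1}^{m(l)} \nu_j \bu^{l(j)},
\end{equation*}
so $\sum_{j=1}^{m(l)} \nu_j \bu^{l(j)} = (\hat{\gamma}^{k(l)})^{-1} F^* \psi_l$. Substituting this into the definition of $\bx_{ls}$ produces the middle and right-hand forms in \autoref{eq:decoder}.

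Finally I would address the minimum-norm claim. By construction $\bx_{ls}$ lies in $\mathrm{span}\{\bu^{l(j)}\} \subseteq \mathrm{range}(F^*) = \mathrm{ker}(F)^\perp$, so any other solution of $F\bx = \by_{ls}$ differs from $\bx_{ls}$ by an element of $\mathrm{ker}(F)$, and Pythagoras gives $\norm{\bx_{ls}} \leq \norm{\bx}$ for every such $\bx$. Under the standing assumption that $F$ has trivial nullspace, the solution is in fact unique, so the minimum-norm qualifier is automatic. The only mild subtlety worth flagging is that the coefficients $\nu_j$ obtained from \autoref{eq:ident} are not canonical when $m(l) > 1$, but the identity $\sum_j \nu_j \bu^{l(j)} = (\hat{\gamma}^{k(l)})^{-1} F^* \psi_l$ shows that the resulting $\bx_{ls}$ is independent of the particular choice of basis inside $E^{k(l)}$; this is the only place where one has to be careful, and it follows without further computation once the $F^*$-form is in hand.
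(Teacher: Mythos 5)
Your proof is correct and follows essentially the same route as the paper: expand $\psi_l$ in the singular basis via \autoref{eq:ident}, use the relations $F\bu^{l(j)}=\hat{\gamma}^{k(l)}\bv^{l(j)}$ and $F^*\bv^{l(j)}=\hat{\gamma}^{k(l)}\bu^{l(j)}$ to verify $F\bx_{ls}=\by_{ls}$ and the equivalent $F^*$-forms. You additionally make explicit two points the paper leaves implicit --- that $\bx_{ls}\in\mathcal{R}(F^*)=\ker(F)^\perp$ yields the minimum-norm property, and that the construction is independent of the choice of the coefficients $\nu_j$ within $E^{k(l)}$ --- which is a welcome tightening rather than a different approach.
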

\begin{remark} \autoref{le:decoder} is implemented as follows. $QR$ decomposition is implemented to get the eigenfunction $(\psi_l)$, from which a decoder is implemented via \autoref{eq:decoder}. 
\end{remark}
\section{Numerical simulations}  \label{se:numerics}
We consider two simulation scenarios: First we prove that the singular value decomposition from training pairs is feasible.
Secondly, we study a reconstruction test with a learned operator. In both cases, we use the Radon transform for two-dimensional images as the physical model for the forward operator, leveraging its analytically known singular value decomposition. (see \cite{Dav81,Nat01}).

\subsection{SVD from training pairs}
We verify numerically that orthonormalization of data ($\bx_i \to \overline{\bx}_i$, $i=1,\ldots,N$) and a principal component analysis of $\set{\overline{\by}_i = F(\overline{\bx}_i):i=1,\ldots,N}$ (see \autoref{eq:gsi} and \autoref{eq:gsi2}) provides us with the singular value decomposition of the discrete operator $F$ (see \autoref{fig:scheme_linear}). 

We take as a prototype example for $F$ the Radon transform in two dimensions (see \autoref{de:radon} below). The singular value decomposition of the Radon transform has been computed in \cite{Dav81} (see also \cite{Nat01}) for the general case of images of $n$ variables. In particular for $n=2$ we consider the Radon transform as an operator from $L^2(\mathcal{B}(0,1))$ into $L^2(Z,w^{-1})$, where $\mathcal{B}(0,1)$ is the unit disk in $\R^2$ centered at the origin, $Z=\mathbb{S}^1 \times [-1,1]$, and $w(s)=(1-s^2)^{1/2}$ is a weight function. This means that $L^2(Z,w^{-1})$ is the weighted $L^2$-space with norm
\begin{equation*}	
	\|g\|_{L^2(Z,w^{-1})}=\int_{\mathbb{S}^1}\int_{-1}^1 |g(\omega,s)|^2 w(s)^{-1}\ ds\d\omega \;.
\end{equation*}  
for $g \in L^2(Z,w^{-1})$.  
Before diving into the numerical details, we provide the necessary notational clarifications and preliminary information to facilitate the analysis.
\begin{convention} \label{co:omegaphi}
	For $\vec{\omega} = (\omega_1,\omega_2)^T \in \mathbb{S}^1$ we use the angular representation  $\phi = \tan^{-1} (\omega_2/\omega_1) \in [0,2\pi)$, which therefore satisfies
	\begin{equation} \label{eq:theta}
		\omega_1=\cos(\phi) \text{ and } \omega_2= \sin(\phi). 
	\end{equation}
	Moreover, the orthogonal vector related to $\vec{\omega}$ is given by 
	$\vec{\omega}^\bot = (-\sin(\phi),\cos(\phi))^T$.
\end{convention}
For the reader's convenience we recall the definition of the Radon transform for functions on the unit disk in $\R^2$:
\begin{definition}[Radon transform on the unit disk] \label{de:radon}
	Let $Z:= \mathbb{S}^1 \times [-1,1]$. We define the Radon transform as
	\begin{equation} \label{eq:radon} 
		\begin{aligned}
			R: L^2(\mathcal{B}(0,1)) &\to L^2({Z},w^{-1}).\\
			f &\mapsto R[f](\vec{\omega},s) = \int_{-\sqrt{1-s^2}}^{\sqrt{1-s^2}} f\left( s \vec{\omega} + 
			t \vec{\omega}^\bot \right) d t
		\end{aligned}
	\end{equation}
\end{definition}
One can show that this operator is continuous and satisfies (see \cite{Dav81}): 
\begin{equation*}
	\|R[f]\|_{L^2({Z},w^{-1})}\leq \sqrt{4\pi}\|f\|_{L^2(\mathcal{B}(0,1))}.
\end{equation*}
In the following, we recall the expression of the adjoint operator of the Radon-transform $R$ defined on the unit disk. This is the operator $R^*: L^2(Z,w^{-1}) \to L^2(\mathcal{B}(0,1))$, which satisfies   
\begin{equation*}
\inner{R[f]}{g}_{L^2({Z},w^{-1})} = \inner{f}{R^*[g]}_{L^2(\mathcal{B}(0,1))}.
\end{equation*}
\begin{definition}[Adjoint] \label{de:back}
	For every $g \in L^2(Z,w^{-1})$ and almost all $\vec{x}= (x_1,x_2)^T \in \R^2$ the adjoint of the Radon-transform $\R^*$ is given by
	\begin{equation*}
		R^*[g](\vec{x})=\int_{\mathbb{S}^1}\frac{g(\vec{\omega},\vec{x}\cdot\vec{\omega})}{w(\vec{x}\cdot\vec{\omega})}\, d\vec{\omega}.
	\end{equation*} 
\end{definition}
In the following we give a survey on the singular value decomposition (SVD) of the Radon transform for functions on the unit disk. The results are essential taken from \cite{Nat01} with the main difference that the we consider the adjoint $R^*$ restricted to the range of $R$ (in contrast to \autoref{de:back}). 

Before recalling the spectral decomposition of the Radon-transform, we review the general definition of a spectral decomposition:
\begin{definition}[Spectral decomposition, \cite{EngHanNeu96}] Let $K : X \to Y$ be a compact linear operator. A {\bf singular system} $(u_k,v_k;\gamma_k)$ is defined as follows:
	\begin{enumerate}
		\item $\gamma_k^2$, $k \in \N_0$ are the non-zero eigenvalues of the selfadjoint operator $K^*K$ (and also $KK^*$) written in decreasing order. We always take $\gamma_k > 0$.
		\item $u_k$, $k \in \N_0$ are a complete orthonormal system of eigenvectors of $K^*K$ (on the space $\overline{\mathcal{R}(K^*)} =\overline{\mathcal{R}(K^*K)}$. 
		\item $v_k := \frac{1}{\norm{Ku_k}} Ku_k$, $k \in \N_0$.
	\end{enumerate}
	The set $\set{v_k:k \in \N_0}$ are a complete orthonormal system of eigenvectors of $K K^*$ which span $\overline{\mathcal{R}(K)} = \overline{\mathcal{R}(K K^*)}$. Moreover, the following formulas hold:
	\begin{equation*}
		\begin{aligned}
			K u_k &= \gamma_k v_k, \quad 
			K^* u_k = \gamma_k v_k, \\
			K x &= \sum_{k=0}^\infty \gamma_k \inner{x}{u_k} v_k \text{ for all } x \in X, \quad  	
			K^* y &= \sum_{k=0}^\infty \gamma_k \inner{y}{v_k} u_k \text{ for all } y \in Y. 
		\end{aligned}
	\end{equation*} 
\end{definition}

\begin{theorem}\cite[p 99]{Nat01} \label{th:singular_values}
	The spectral decomposition of the Radon transform is given by $\set{(u_{k,l},v_{k,l};\gamma_{k,l}): (k,l) \in \mathcal{I}}$
	where 
	\begin{enumerate}
		\item \label{it.1} $\mathcal{I} = \set{(k,l): k \in \N_0, l \in \set{0,1,\ldots,k},  \text{ satisfying } l+k \text{ is even}}.$
		\item \label{it.2} $\gamma_k^2=\gamma_{k,l}^2 = \frac{4\pi}{k+1} > 0$ is independent of $l$.
		\item Let $Y_s$ be the spherical harmonics in $\R^2$ and let 
		\begin{equation*}
			s \mapsto C_k(s)=\frac{\sin \left((k+1)\arccos{(s)}\right)}{\sin(\arccos{(s)})} 
		\end{equation*}
		denote the Chebyshev polynomials of the second kind. Moreover, let 
		$$\frac{1}{c(k)} = \norm{ w C_k Y_{k-2l}}_{L^2(Z,w^{-1})} = \norm{\sqrt{w} C_k}_{L^2(-1,1)}.$$
		Then the normalized eigenfunctions of $RR^*$ on the orthogonal complement of the nullspace of $R^*$, $\mathcal{N}^\bot$ are given by 
		\begin{equation}\label{eq:vkl_ukl}
			(s,\omega) \to v_{k,l}(s,\omega):=c(k) w(s)C_k(s)Y_{k-2l}(\omega)  \text{ and } u_{k,l} = \frac{1}{\gamma_k} R^*[v_{kl}]\;. 
		\end{equation}
	\end{enumerate}
\end{theorem}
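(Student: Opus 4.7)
The overall plan is to verify directly that the system $\{(u_{k,l},v_{k,l};\gamma_{k,l})\}$ satisfies the three defining properties of a singular system from the definition stated just above. Concretely, I would first show that $\{v_{k,l}: (k,l) \in \mathcal{I}\}$ is an orthonormal set of eigenfunctions of $RR^*$ with eigenvalues $\gamma_k^2 = 4\pi/(k+1)$, then define $u_{k,l}$ via $u_{k,l} = \gamma_k^{-1} R^*[v_{k,l}]$ and check that this produces an orthonormal eigensystem of $R^*R$ on $\overline{\mathcal{R}(R^*)}$, and finally establish completeness so that every function in $\overline{\mathcal{R}(R)}$ is captured.

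First I would verify orthonormality of $\{v_{k,l}\}$ in $L^2(Z,w^{-1})$. Since $v_{k,l}$ has the tensor product form $c(k)\,w(s)C_k(s)\,Y_{k-2l}(\omega)$, the weighted inner product factorizes: the angular part uses orthonormality of the spherical harmonics $Y_m$ on $\mathbb{S}^1$, and the radial part reduces to $\int_{-1}^1 C_k(s)C_{k'}(s)\,w(s)\,ds$, which vanishes for $k\neq k'$ by the classical orthogonality of Chebyshev polynomials of the second kind. The normalization constant $c(k)$ is then forced precisely by $\|\sqrt{w}\,C_k\|_{L^2(-1,1)}^{-1}$. The parity restriction $k+l$ even in the index set $\mathcal{I}$ reflects the evenness $R[f](-\vec\omega,-s)=R[f](\vec\omega,s)$ of functions in the range of $R$, so any eigenfunction of $RR^*$ must inherit this symmetry; tracking the signs through $C_k(-s)=(-1)^kC_k(s)$ and $Y_m(-\vec\omega)=(-1)^m Y_m(\vec\omega)$ selects exactly the admissible $(k,l)$.

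The heart of the proof is the identity $RR^*[v_{k,l}] = \gamma_k^2 v_{k,l}$. To establish it, I would compute $R^*[v_{k,l}](\vec x)$ using the formula from Definition~\ref{de:back}, which turns the weight $w$ into $1$ and leaves the integral $c(k)\int_{\mathbb{S}^1} C_k(\vec x\cdot\vec\omega)\,Y_{k-2l}(\vec\omega)\,d\vec\omega$. The crucial analytic step is a Funk–Hecke type evaluation: because $C_k$ is (up to scaling) a Gegenbauer polynomial of index $1$, this integral collapses to a multiple of $|\vec x|^{k}\,Y_{k-2l}(\vec x/|\vec x|)$ times a Jacobi polynomial in $|\vec x|^2$, i.e.\ a classical Zernike polynomial on the disk. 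Applying $R$ to this Zernike polynomial is then a textbook computation: the line integral of a Zernike polynomial reproduces, up to the factor $4\pi/(k+1)$, the original $w(s)C_k(s)Y_{k-2l}(\vec\omega)$. This step, identifying the Jacobi/Chebyshev structure and using the generating function $\sum_k C_k(s)t^k = (1-2st+t^2)^{-1}$, is where all the work lies; it also pinpoints the value $\gamma_k^2 = 4\pi/(k+1)$ independent of $l$.

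Once the eigenvalue identity is in hand, the remaining items are bookkeeping: defining $u_{k,l} := \gamma_k^{-1} R^*[v_{k,l}]$ automatically gives orthonormality of $\{u_{k,l}\}$ in $L^2(\mathcal{B}(0,1))$ (from the corresponding property of $\{v_{k,l}\}$ and the Hilbert adjoint identity), and the singular-system relations $Ru_{k,l}=\gamma_k v_{k,l}$, $R^*v_{k,l}=\gamma_k u_{k,l}$ follow by direct substitution. The last piece is completeness of $\{v_{k,l}: (k,l)\in\mathcal{I}\}$ in $\overline{\mathcal{R}(R)}$, which I would obtain by noting that the products $w(s)C_k(s)Y_m(\vec\omega)$ span a dense subspace of $L^2(Z,w^{-1})$ (tensor product of complete orthonormal systems), intersect with the parity condition imposed by $\mathcal{R}(R)$ selects exactly $\mathcal{I}$, and no non-trivial element of $\overline{\mathcal{R}(R)}$ can be orthogonal to all $v_{k,l}$. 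The main obstacle is unambiguously the Funk–Hecke integral above; once that identity is established, the rest of the proof is assembly.
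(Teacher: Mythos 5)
The paper does not actually prove this theorem: it is imported verbatim by citation from Natterer (and Davison), so there is no internal proof to compare against. Your outline reconstructs the standard argument from that literature and is essentially sound: orthonormality of the $v_{k,l}$ does factor into Chebyshev--$U$ orthogonality with weight $w$ on $[-1,1]$ and orthogonality of circular harmonics; the crux is indeed the single identity $RR^*[v_{k,l}]=\gamma_k^2v_{k,l}$; and the passage to the $u_{k,l}$ and to completeness is routine bookkeeping once that identity is available. One remark on direction: Davison and Natterer run the computation the other way, first writing down the Zernike-type polynomials on the disk and evaluating $R$ of them in closed form (the Radon transform of a Gegenbauer/Jacobi weight-polynomial product), which avoids having to justify a Funk--Hecke evaluation at interior points $\abs{\vec{x}}<1$; your route through $R^*[v_{k,l}]$ is equivalent but requires the extra step of expanding $C_k(\vec{x}\cdot\vec{\omega})$ into harmonics of degrees $k,k-2,\ldots$ before the angular integral collapses. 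Either way the heavy lifting is a classical special-function identity that you name but do not carry out, which is acceptable for an outline provided it is then quoted or verified.

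There is one concrete slip: your justification of the index set $\mathcal{I}$ via the evenness symmetry $g(-\vec{\omega},-s)=g(\vec{\omega},s)$ of the range of $R$ does not work with the indexing as printed in the theorem. With $v_{k,l}=c(k)\,wC_kY_{k-2l}$ the sign under $(\vec{\omega},s)\mapsto(-\vec{\omega},-s)$ is $(-1)^k(-1)^{k-2l}=+1$ for \emph{every} $l\in\{0,\ldots,k\}$, so nothing is selected and the restriction to $k+l$ even cannot come from this symmetry. Your argument is correct verbatim for Natterer's own convention, in which the angular factor is a harmonic of degree $l$ (so the sign is $(-1)^{k+l}$ and the evenness of the range kills exactly the $k+l$ odd functions, consistent with the nullspace description in \autoref{ex:sets_Ek}); the theorem as stated appears to mix that convention with the Davison-style index $k-2l$ used for the Zernike polynomials on the image side. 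You should either adopt the degree-$l$ convention for the parity argument, or derive the restriction to $\mathcal{I}$ from the vanishing of the relevant Gegenbauer expansion coefficients in your Funk--Hecke step. With that repaired, the plan is a faithful and complete skeleton of the classical proof.
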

The calculation of the singular value decomposition of the Radon transform has been computed in several work (see for instance \cite{Dav81,Maa87,Nat01} and \cite[Theorem 6.4]{NatWue01} to mention but a few). 
\begin{remark}\label{ex:sets_Ek} 
	\begin{enumerate}
		\item The existence of a singular value decomposition with $\gamma_k \to 0$ in particular shows that the Radon-transform is compact.
		\item 
		From \autoref{th:singular_values}, \autoref{it.1} and \autoref{it.2} we see that, in general, $\gamma_k$ has multiplicity higher than one. The sets $E^k$ associated to a spectral value $\gamma_k$ are spanned by the spectral functions $v_{kl}$ with $l$ such that $(k,l) \in \mathcal{I}$. For example, taking $k=0,\ldots,7$, and $l=0,\ldots,k$, with $k+l$ even (meaning that $(k,l) \in \mathcal{I}$), we have 
		\begin{equation}\label{eq:Ek_example}
			\begin{aligned}
				E^1&=\textrm{span}\set{v_{0,0}},\quad E^2=\textrm{span}\set{v_{1,1}},\quad E^3=\textrm{span}\set{v_{2,0},v_{2,2}},\quad E^4=\textrm{span}\set{v_{3,1},v_{3,3}},\\
				E^5&=\textrm{span}\set{v_{4,0},v_{4,2}, v_{4,4}},\quad E^6=\textrm{span}\set{v_{5,1},v_{5,3}, v_{5,5}}, \quad E^7 =\textrm{span}\set{v_{6,0},v_{6,2}, v_{6,4}, v_{6,6}}, \\
				E^8 &=\textrm{span}\set{v_{7,1},v_{7,3}, v_{7,5}, v_{7,7}}.
			\end{aligned}
		\end{equation}
		It is an interesting fact stated in \cite{Nat01} that the adjoint of the Radon-transform $R^*$ has a nullspace. In fact the nullspace of $\R^*$ is given by 
		\begin{equation*}
			\mathcal{N} = \set{v_{k,l}: k \in \N_0, l \in \set{0,1,\ldots,k}, \text{ satisfying } l+k \text { is odd}}.
		\end{equation*}
		In fact also $RR^*$ has the same nullspace, which is shown for instance in \cite[p. 99]{Nat01}. 
	\end{enumerate}
	
\end{remark}

\subsection{Numerical simulations} \label{numerics} 
In this section, we present two numerical experiments: 
\begin{enumerate}
	\item The first tests concern computing the SVD of the Radon-transform in two test scenarios:
	\begin{enumerate}
		\item {\bf Learning spectral functions from analytical spectral functions:} We use as training data $(\bx_i,\by_i)$, $i=1,\ldots,49$ the first 49, analytical given, spectral functions $(u_{kl},v_{kl})$, defined in \autoref{eq:vkl_ukl}. We compare the outcome with our orthonormalization approach. 
		\item {\bf Learning spectral functions from arbitrary test functions:} Here we use annotations of images, which are orthonormalized. This is of course a much harder problem.
	\end{enumerate}
	\item We are concerned with decoding by making use of \autoref{eq:decoder}. Here we use both singular value decompositions obtained in the first step.
\end{enumerate}

\subsubsection{Singular value decomposition}
We visualize the first twenty benchmark images, which are the singular functions $u_{k,l}$, $v_{k,l}$ with $(l,k) \in \mathcal{I}$ of the Radon-transform outlined in \autoref{th:singular_values}, \autoref{fig:ex_func_v} and \autoref{fig:ex_func_u}. These plots are used to visualize the difference between some of the benchmark data and the learned singular functions. We emphasize that in the learned approaches we reconstruct vectors $u_{k,l} \in \R^{\underline{m}}$ and $v_{k,l} \in \R^{\overline{m}}$, with $\underline{m}=\overline{m}=2500$. However, we maintain the same notation used in the theoretical part. 
\begin{figure}[h!]
	\centering
	\includegraphics[width=\textwidth]{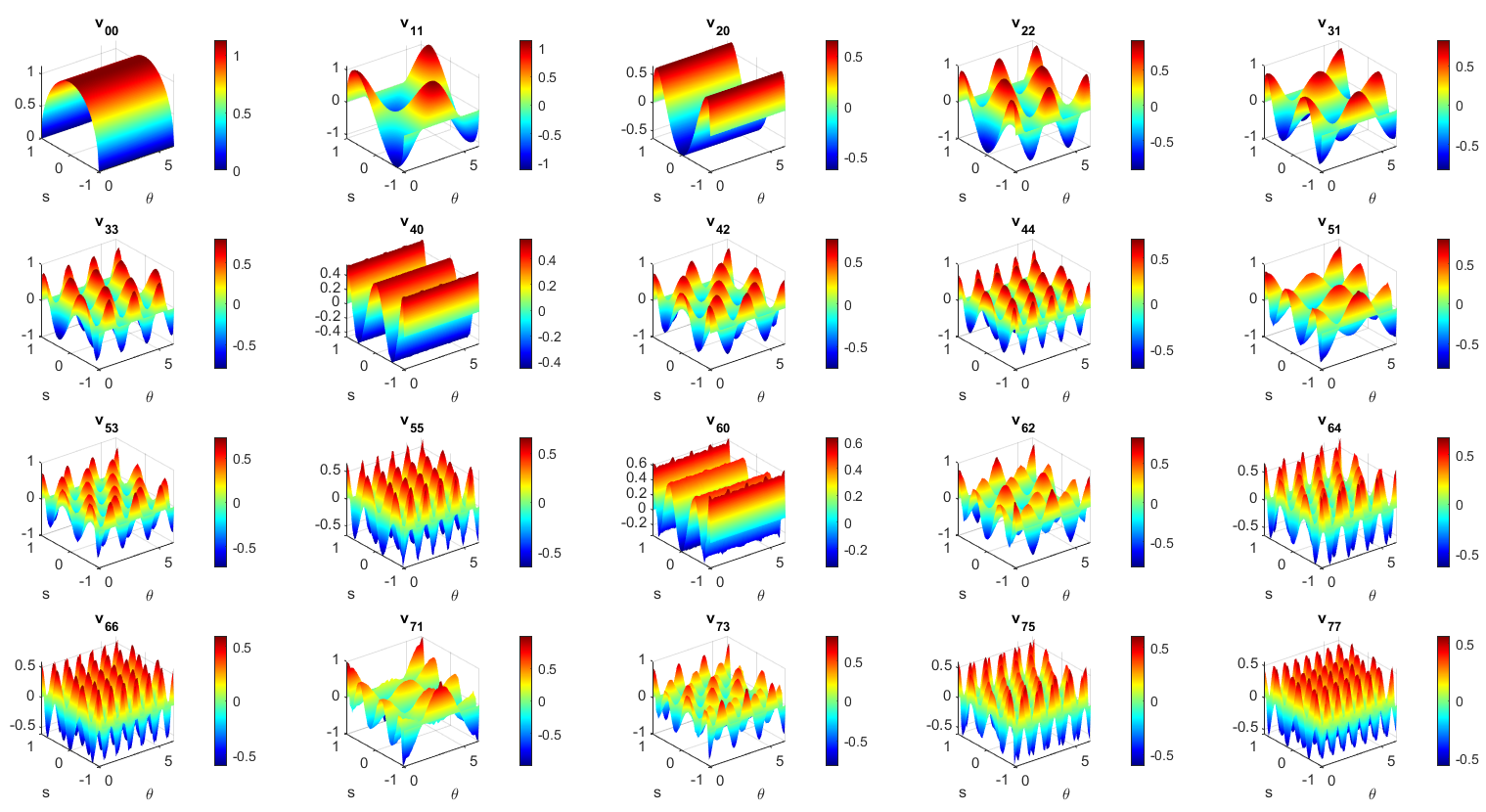}
	\caption{This plot visualizes the first twenty normalized singular functions $v_{kl}$, defined in \autoref{eq:vkl_ukl}, for $k =0,1,\ldots,7$, $l=0,1,\ldots,k$ and $l+k$ even.}
	\label{fig:ex_func_v}
\end{figure}
\begin{figure}[h!]
	\centering
	\includegraphics[width=\textwidth]{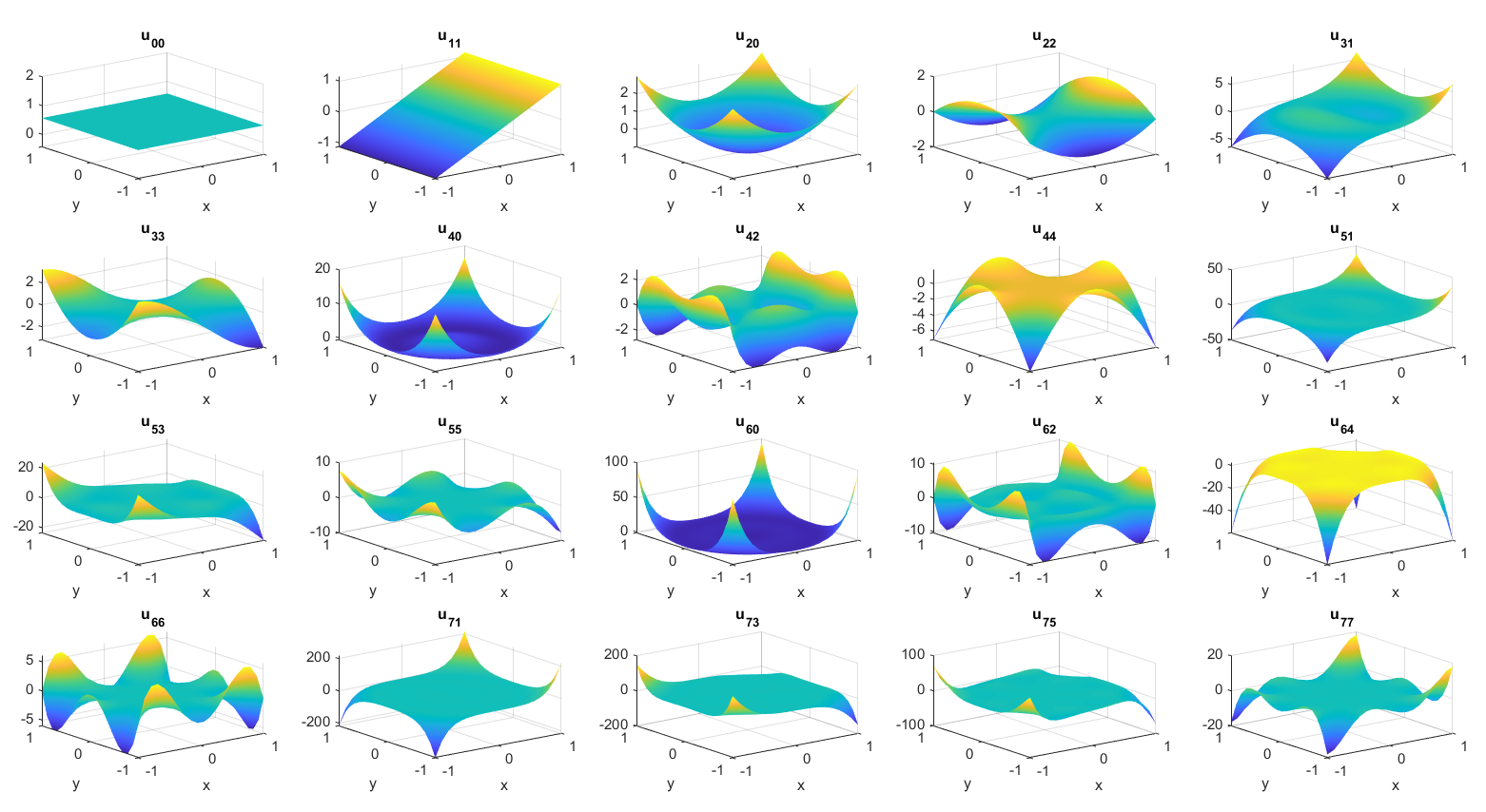}
	\caption{This plot visualizes the first twenty singular functions $u_{k,l}$, defined in \autoref{eq:vkl_ukl}, for $k =0,1,\ldots,7$, $l=0,1,\ldots,k$ and $l+k$ even.}
	\label{fig:ex_func_u}
\end{figure} 
We note that the first 49 non-zero eigenvalues of $R R^*$ belong to the following 12 distinct singular values 
\begin{equation} \label{eq:gamma}
	\Gamma= \set{\gamma_{0,0}, \gamma_{1,1}, \gamma_{2,0}, \gamma_{3,1}, \gamma_{4,0}, \gamma_{5,1}, \gamma_{6,0}, \gamma_{7,1}, \gamma_{8,0}, \gamma_{9,1}, \gamma_{10,0}, \gamma_{11,1}, \gamma_{12,0} }.
\end{equation}

\begin{example}[Learning spectral functions from analytical spectral functions] \label{eq:ex_1} 
	We start with a basic test by learning the spectral functions of the Radon-transform from input of the column vectors $\overline{y}_{k,l}:=v_{k,l}$, which are discretizations of the spectral functions of the Radon-transform as written down in \autoref{eq:vkl_ukl}. This should be the most simple test-case and the results should return discrete singular functions of the analytical singular functions. To be specific, we want to verify \autoref{th:proc} numerically. For this purpose, we construct the matrix $\overline{Y}$ consisting of the first $49$ singular functions (column vectors) $\overline{y}_{k,l}=v_{k,l}$ (some of them are shown in \autoref{fig:ex_func_v}) and the according matrix $A = \overline{Y} \overline{Y}^T \in \R^{\overline{m} \times \overline{m}}$ and $\overline{Y} \in \R^{\overline{m} \times 49}$, with $\overline{m}=2500$. In \autoref{fig:psi}, we show the first twenty results obtained from the singular value decomposition of $A$, graphically representing the eigenvectors of the matrix $\overline{W} = (\psi_1,\ldots,\psi_{49})$ as defined in \autoref{eq:ortho} in \autoref{th:proc}.
	\begin{figure}[h!]
		\centering
		\includegraphics[width=\textwidth]{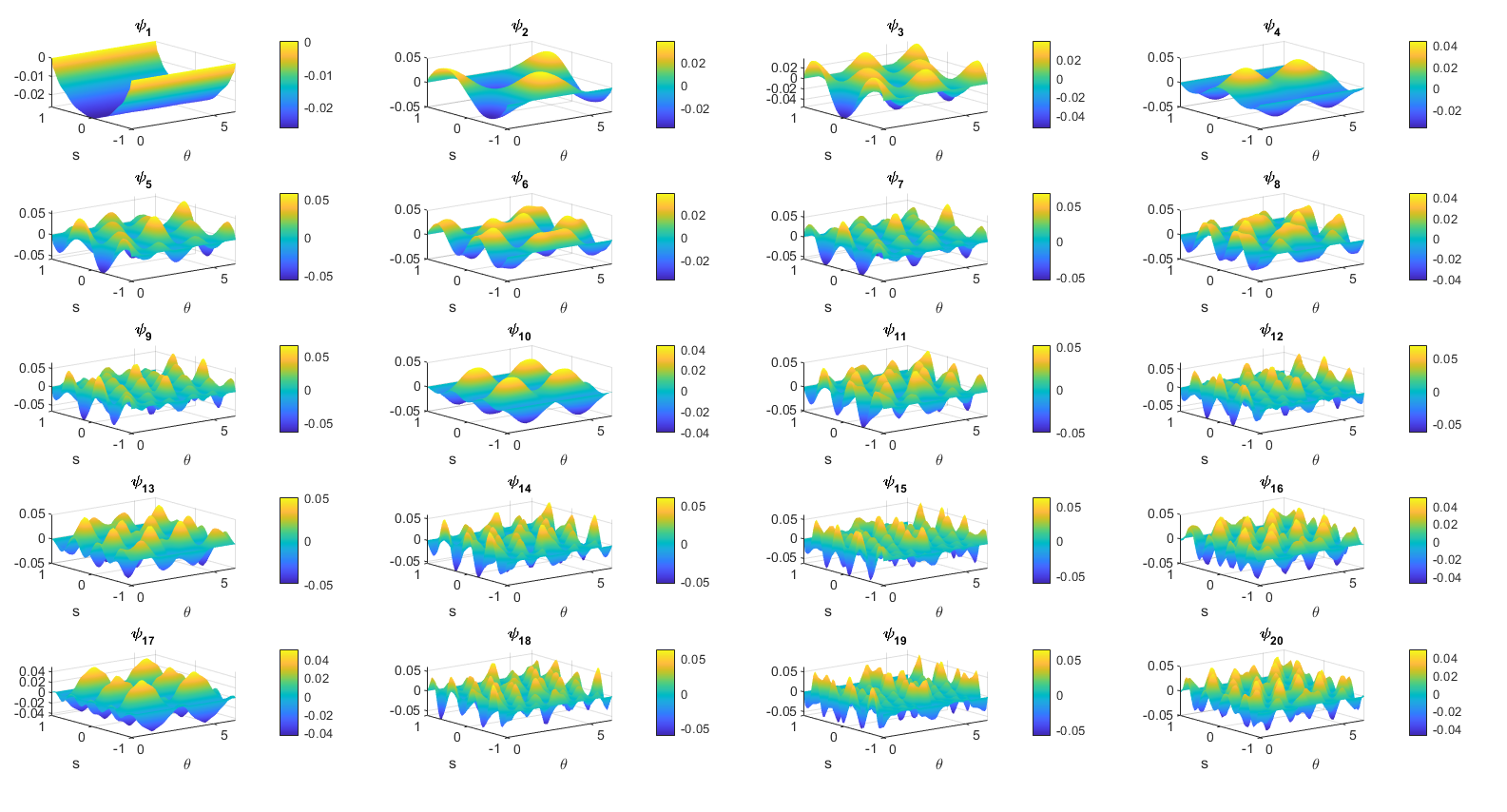}
		\caption{This plot shows the first twenty functions $\psi_h$ obtained from the singular value decomposition of the matrix $A$, defined in \autoref{eq:pcaa} with column vectors $\overline{y}_{kl}:=v_{kl}$. These results should be compared with $v_{k,l}$ as plotted in \autoref{fig:ex_func_v}. It is obvious that if a singular value has multiplicity $1$, then $v_{0,0}$ is also an eigenfunction, so $\psi_1$ perfectly represents the claim. For higher indices linear combinations are reconstructed approximately.}
		\label{fig:psi}
	\end{figure}
\end{example}

\begin{example} Comparing the results from analytical and learned singular value decomposition reveals what has been stated in \autoref{th:proc} that in case of eigenvalues of $RR^*$ with multiplicity higher that one only the according eigenspace can be reconstructed. In other words every $\psi_h$ should be able to be written as a linear combination of the according functions $v_{kl}$ with $(k,l) \in \mathcal{I}$ with some index $l$, which needs to be identified in addition. 
	
	Using the eigenspaces $E^k$, for $k=1,\ldots,13$, according to the spectral value $\gamma_k$, as defined in \autoref{th:proc}, we identify the sets to which the numerically calculated functions $\psi_{h}$, for $h=1,\ldots,49$ most likely belong. This is a two-step process as discussed below:
	
	\begin{enumerate}
		\item Identifying the set $E^k$, which $\psi_h$ most likely belongs to: We obtained with our Matlab code the classification given in \autoref{tab:Ek_psih}
		\begin{table}[ht]
			\centering
			\begin{tabular}{|c|c|}
				\hline
				\textbf{Set} & \textbf{Singular Values} \\
				\hline
				$E^1$ & $\psi_1$ \\
				\hline
				$E^2$ & $\psi_2$ \\
				\hline
				$E^3$ & $\psi_3, \psi_4$ \\
				\hline
				$E^4$ & $\psi_5, \psi_6$ \\
				\hline
				$E^5$ & $\psi_7$ \\
				\hline
				$E^6$ & $\psi_9$ \\
				\hline
				$E^7$ & $\psi_8, \psi_{10}$ \\
				\hline
				$E^8$ & $\psi_{11}, \psi_{13}$ \\
				\hline
				$E^9$ & $\psi_{12}, \psi_{14}, \psi_{16}, \psi_{17}$ \\
				\hline
				$E^{10}$ & $\psi_{15}, \psi_{18}, \psi_{20}, \psi_{22}, \psi_{23}, \psi_{45}, \psi_{46}$ \\
				\hline
				$E^{11}$ & $\psi_{19}, \psi_{21}, \psi_{24}, \psi_{25}, \psi_{26}, \psi_{27}, \psi_{44}, \psi_{47}, \psi_{48}, \psi_{49}$ \\
				\hline
				$E^{12}$ & $\psi_{28}, \psi_{30}, \psi_{31}, \psi_{33}, \psi_{34}, \psi_{39}, \psi_{41}, \psi_{43}$ \\
				\hline
				$E^{13}$ & $\psi_{29}, \psi_{32}, \psi_{35}, \psi_{36}, \psi_{37}, \psi_{38}, \psi_{40}, \psi_{42}$ \\
				\hline
			\end{tabular}
			\caption{Set $E^k$ which $\psi_h$ most likely belongs to.}\label{tab:Ek_psih}
		\end{table}
		\item Finding an appropriate linear combination of basis-elements of $E^k$, which optimally approximates $\psi_h$, for $h=1,\ldots,49$. For example, applying MATLAB's linear regression techniques, we obtained the following visual results, see Figure \ref{fig:psi_tog}, for the functions $\psi_h$, where $h=1,2,3,4,38,49$. Each plot displays: (1) the original function $\psi_h$ to be approximated, (2) the approximation using the best set of functions $E^k$ (see Table \ref{tab:Ek_psih}), and (3) the approximation error. For example, regarding the coefficients of the linear combinations for the functions $\psi_h$, where $h=1,2,3,4,38,49$, we have the results in Table \ref{tab:psi_lin_comb}.
		\begin{figure}[h!]
			\centering
			\includegraphics[width=\textwidth]{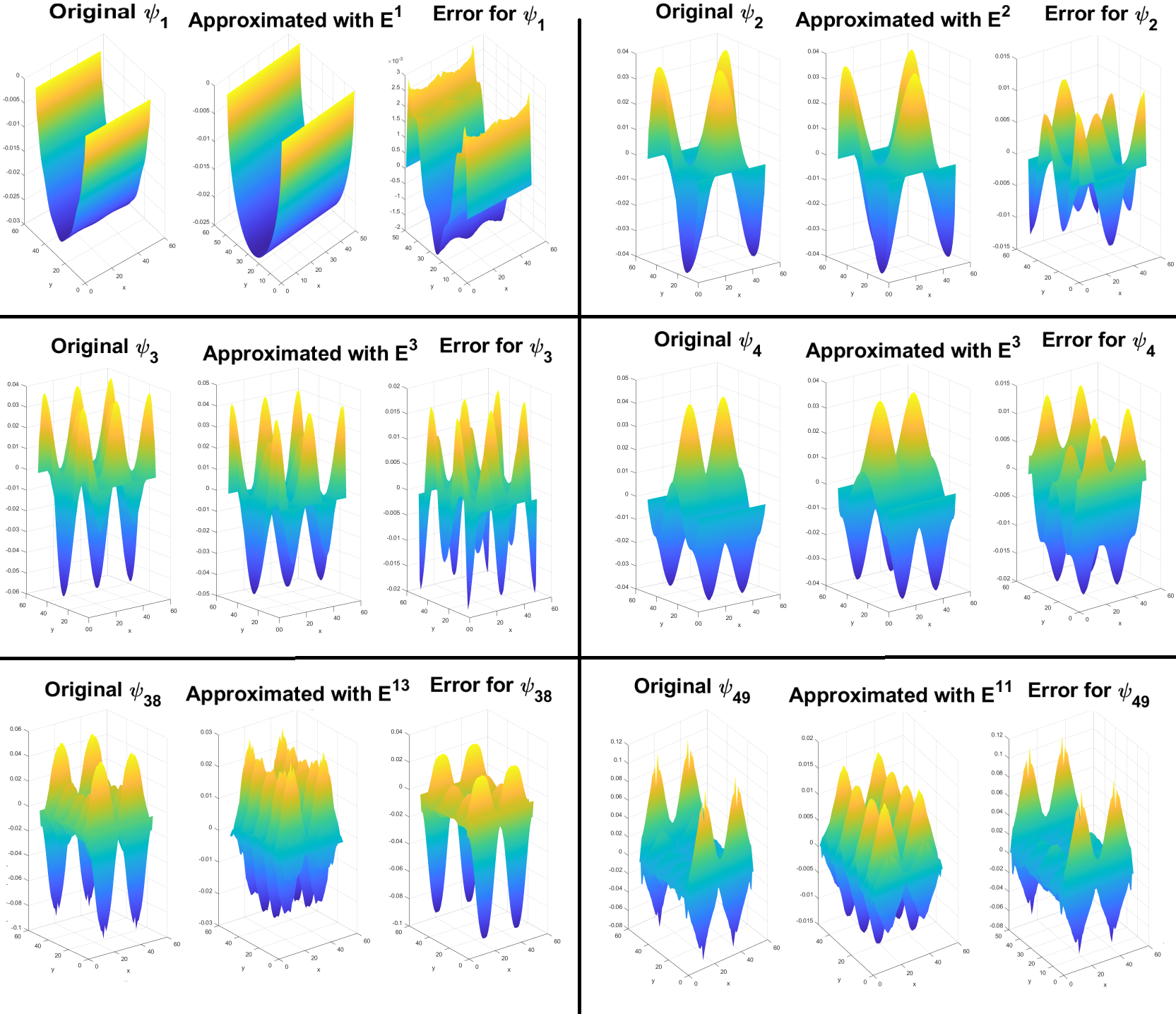}
			\caption{Results of the linear combinations for $\psi_{h}$, with $h=1,2,3,4,38,49$.}
			\label{fig:psi_tog}
		\end{figure}
		
		\begin{table}[ht]
			\centering
			\resizebox{\textwidth}{!}{	
				\begin{tabular}{|c|c|}
					\hline
					\textbf{Set} & \textbf{Linear Combination} \\
					\hline
					$E^1$ & $\psi_1=-0.02188 v_{0,0}$  \\
					\hline
					$E^2$ & $\psi_2=0.03360 v_{1,1}$ \\
					\hline
					$E^3$ & $\psi_3=0.01243 v_{2,0} + 0.03831 v_{2,2}$, $\psi_4=-0.03693 v_{2,0} + 0.01436 v_{2,2}
					$ \\
					\hline
					$E^{11}$ & $\psi_{49}=0.02541 v_{10,0}  -0.01122 v_{10,2}  -0.00060 v_{10,4}  -0.00072 v_{10,6}  -0.00061 v_{10,8} + 0.00003 v_{10,10}$ \\
					\hline
					$E^{13}$ & $\psi_{38}= 0.04555 v_{12,0} -0.02561 v_{12,2} -0.00396 v_{12,4} -0.00004 v_{12,6} +  0.00012 v_{12,8} + 0.00003 v_{12,10}  -0.00153 v_{12,12}$ \\
					\hline
				\end{tabular}
			}
			\caption{$\psi_h$, for $h=1,2,3,4,38,49$, as linear combinations of the eigenfunctions $v_{kl}$.}\label{tab:psi_lin_comb}
		\end{table}
		The numerical reconstructions of the singular functions is satisfying for the first twelve (the first four are shown in the first and second line of \autoref{fig:psi_tog}). After that the errors are pronounced, with values of the same order of magnitude as those of the functions $\psi_h$ (see, for example, the plots of the last line in \autoref{fig:psi_tog}). 
	\end{enumerate}
	The example has shown that reconstruction of the singular functions and values of operators is possible from training data without knowing the operator explicitly. But the higher oscillating the functions are the more complicated it gets numerically.
\end{example}

\begin{example}[Learning spectral functions from arbitrary test functions] \label{eq:ex_1a} 
	In this example, we consider a dataset, composed of twenty elements, that we have constructed by ourself using a Matlab function. Specifically, each discrete image, of numerical dimension $50\times50$, is composed of ten ellipses that have been generated randomly. In fact, we want to create several images that mimic the structure of the Shepp-Logan phantom. Each ellipse is described by assigning seven parameters (intensity, length of the major semi-axis, length of the minor semi-axis, x-coordinate of the center, y-coordinate of the center, rotation angle) that vary within the ranges reported in the \autoref{tab:par_ellipses}.
	\begin{table}[h!]
		\centering
		\begin{tabular}{|l|l|}
			\hline
			\textbf{Parameter of Ellipse} & \textbf{Range} \\ \hline
			Intensity & $[0.01, 2]$ \\ \hline
			Length of the major semi-axis & $[0.1, 0.7]$ \\ \hline
			Length of the minor semi-axis & $[0.1, 0.7]$ \\ \hline
			x-coordinate of the center & $[-0.6, 0.6]$ \\ \hline
			y-coordinate of the center & $[-0.6, 0.6]$ \\ \hline
			Rotation angle & $[-45^\circ, 134^\circ]$ \\ \hline
		\end{tabular}
		\caption{Parameters and their corresponding ranges for the ellipses of the dataset described in \autoref{eq:ex_1a}.} \label{tab:par_ellipses}
	\end{table}
	In \autoref{fig:data} and \autoref{fig:ortho_data_above}, we present the plot of the images constructed with our MATLAB routine and their orthonormalized version ( in \autoref{fig:ortho_data_above}, we show the projection of the values onto the $xy$ plane for better visualization of the orthonormalization procedure).
	\begin{figure}[h!]
		\centering
		\centering
		\includegraphics[width=\textwidth]{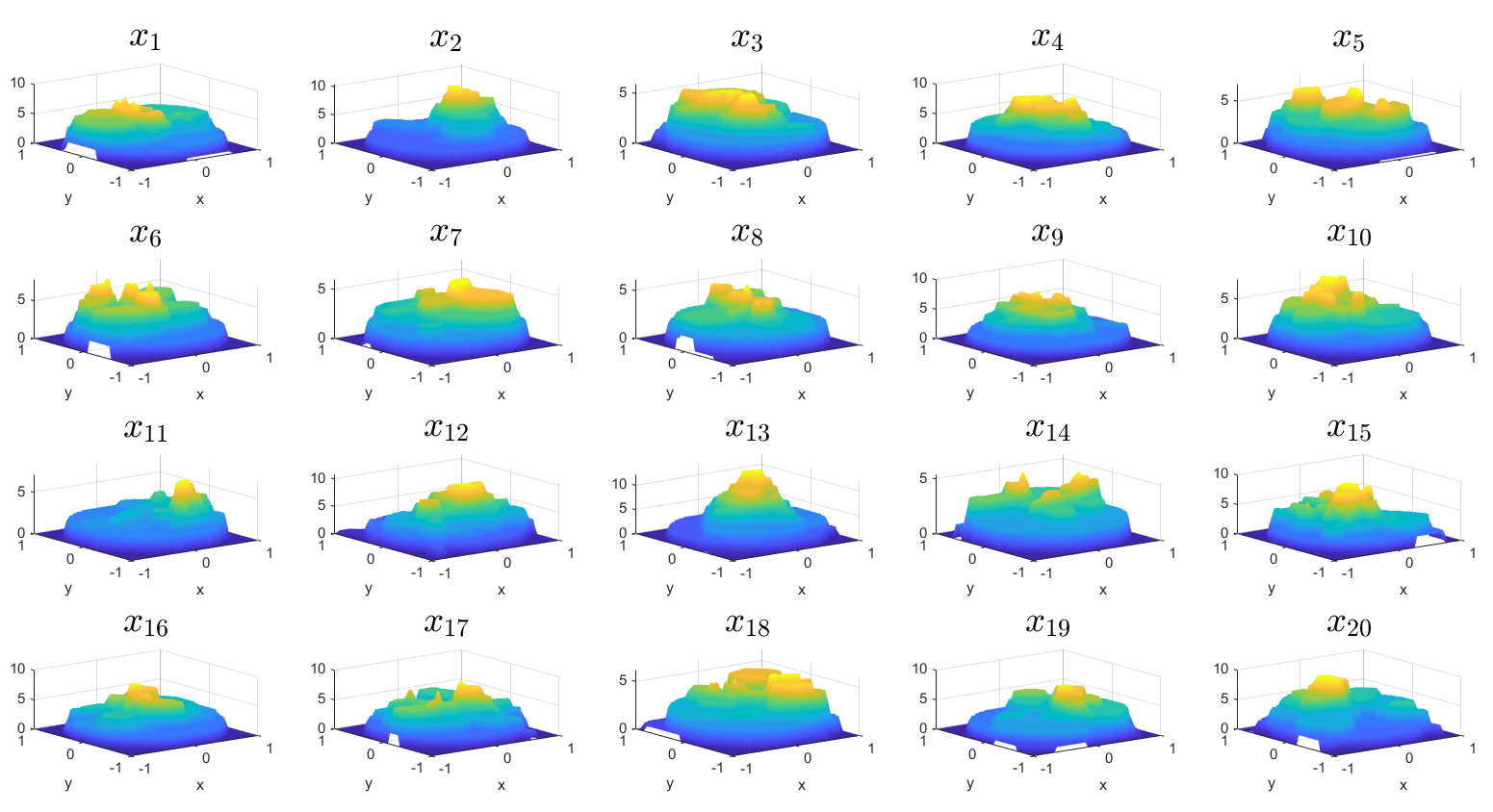} 
		\caption{Images $x_k$, for $k=1,\ldots,20$.}
		\label{fig:data}
	\end{figure}
	\begin{figure}
		\centering
		\includegraphics[width=\textwidth]{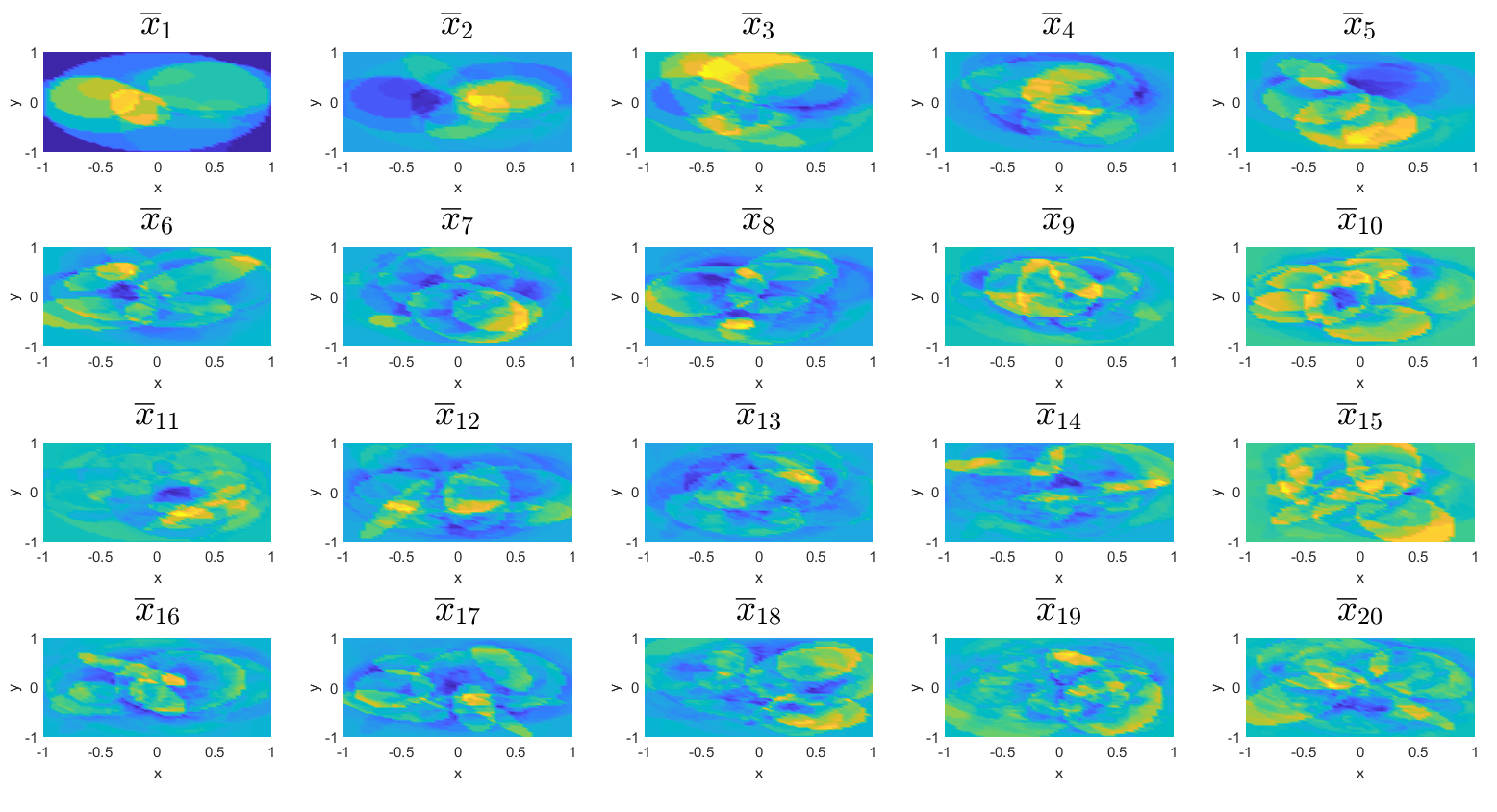}
		\caption{Orthonormalized images $\overline{x}_k$, for $k=1,\ldots,20$, projected onto the plane $xy$.}
		\label{fig:ortho_data_above}
	\end{figure}
	The corresponding data (that is the sinograms) of the orthonormalized images, see \autoref{eq:gsi}, $\overline{y}_k=R(\overline{x}_k)$ are plotted in \autoref{fig:sinograms}.
	\begin{figure}
		\centering
		\includegraphics[width=\textwidth]{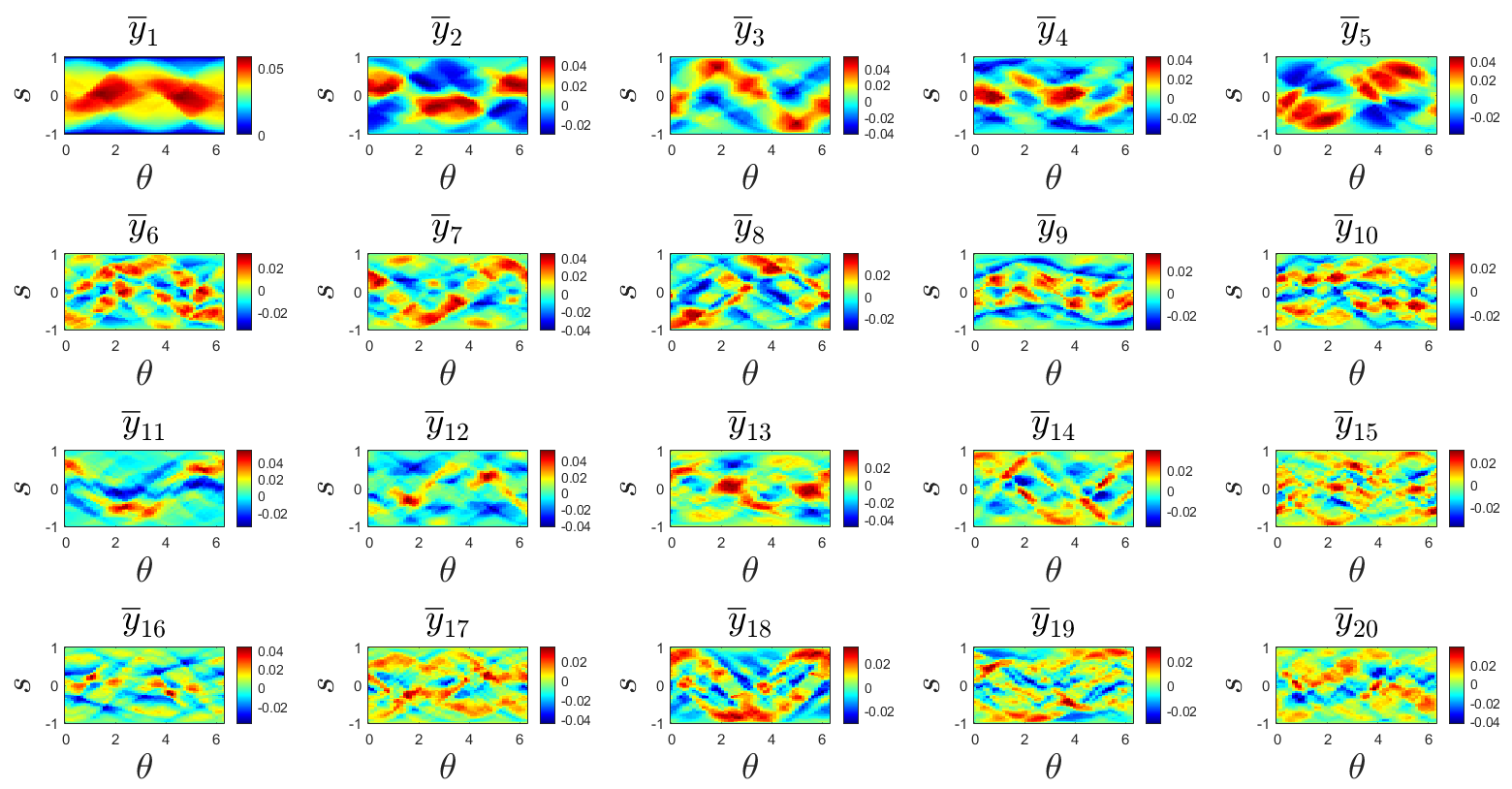}
		\caption{Data $\overline{y}_k$, for $k=1,\ldots,20$.}
		\label{fig:sinograms}
	\end{figure}
	As in the previous numerical example, see \autoref{eq:ex_1}, we utilize the eigenspaces $E^k$ for $k=1,\ldots,8$, to identify the sets to which the numerically computed functions 
	$\psi_h$, for $h=1,\ldots,20$ most likely belong. 
	We obtained with our Matlab code the classification given in \autoref{tab:Ek_psih_sl}.
	\begin{table}[ht]
		\centering
		\begin{tabular}{|c|c|}
			\hline
			\textbf{Set} & \textbf{Singular Values} \\
			\hline
			$E^1$ & $\psi_1$ \\
			\hline
			$E^2$ & $\psi_2, \psi_3$ \\
			\hline
			$E^3$ & $\psi_4, \psi_5, \psi_6$ \\
			\hline
			$E^4$ & $\psi_9$ \\
			\hline
			$E^5$ & $\psi_{10}$ \\
			\hline
			$E^6$ & $\psi_{7}, \psi_{8}, \psi_{18}, \psi_{19}$ \\
			\hline
			$E^7$ & $\psi_{11}, \psi_{12}, \psi_{13}, \psi_{14}, \psi_{17}, \psi_{20}$ \\
			\hline
			$E^8$ & $\psi_{15}, \psi_{16}$ \\
			\hline
		\end{tabular}
		\caption{Set $E^k$ which $\psi_h$ most likely belongs to in the case of \autoref{eq:ex_1a}.}\label{tab:Ek_psih_sl}
	\end{table}
	As before, we identify suitable linear combinations of elements of $E^k$ in order to rewrite the term $\psi_h$. We present in \autoref{fig:psi_shepp_logan} the first four results of the linear combinations, allowing for a direct comparison with the outcomes previously obtained in \autoref{eq:ex_1}.  
	\begin{figure}[h!]
		\centering
		\includegraphics[width=0.48\textwidth]{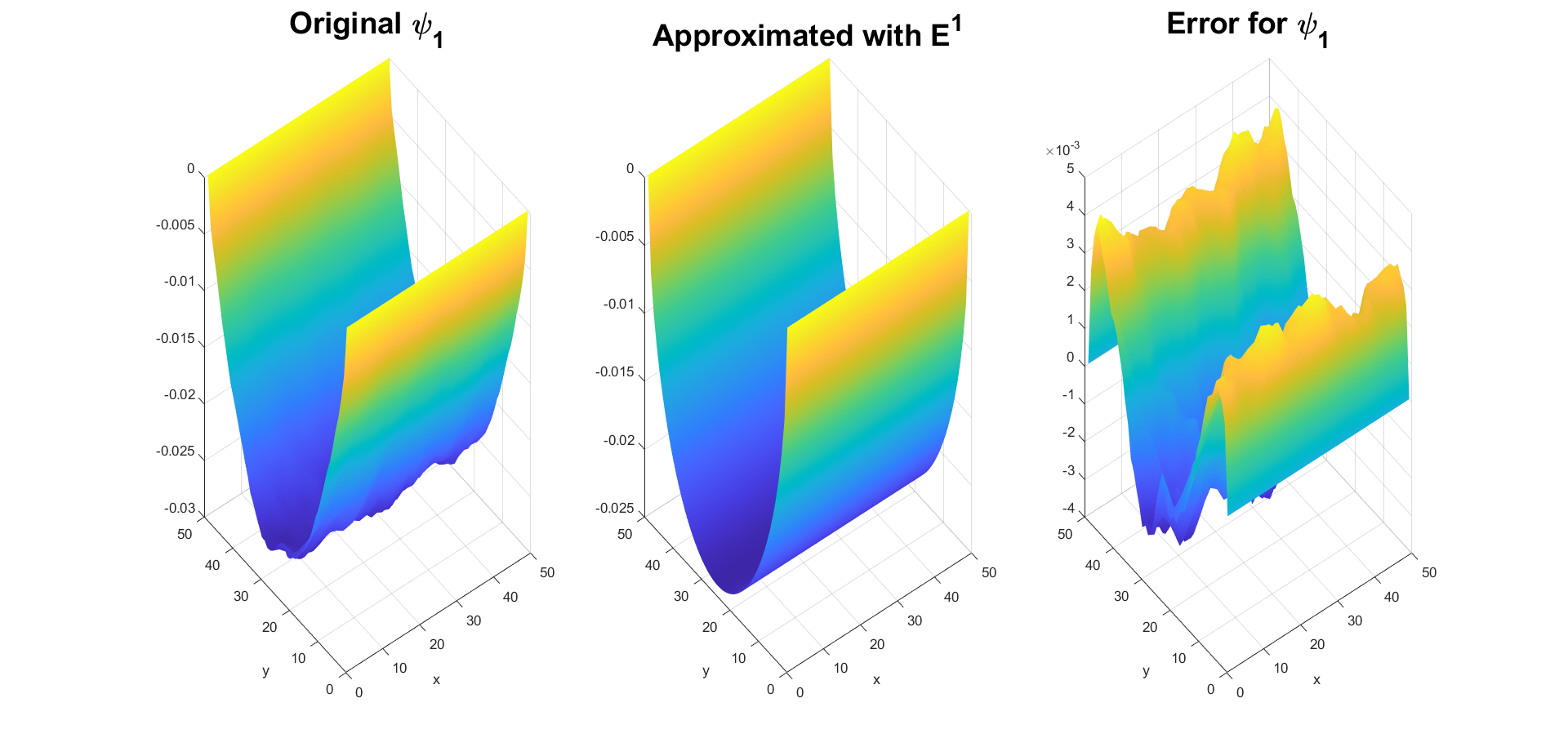}
		\includegraphics[width=0.48\textwidth]{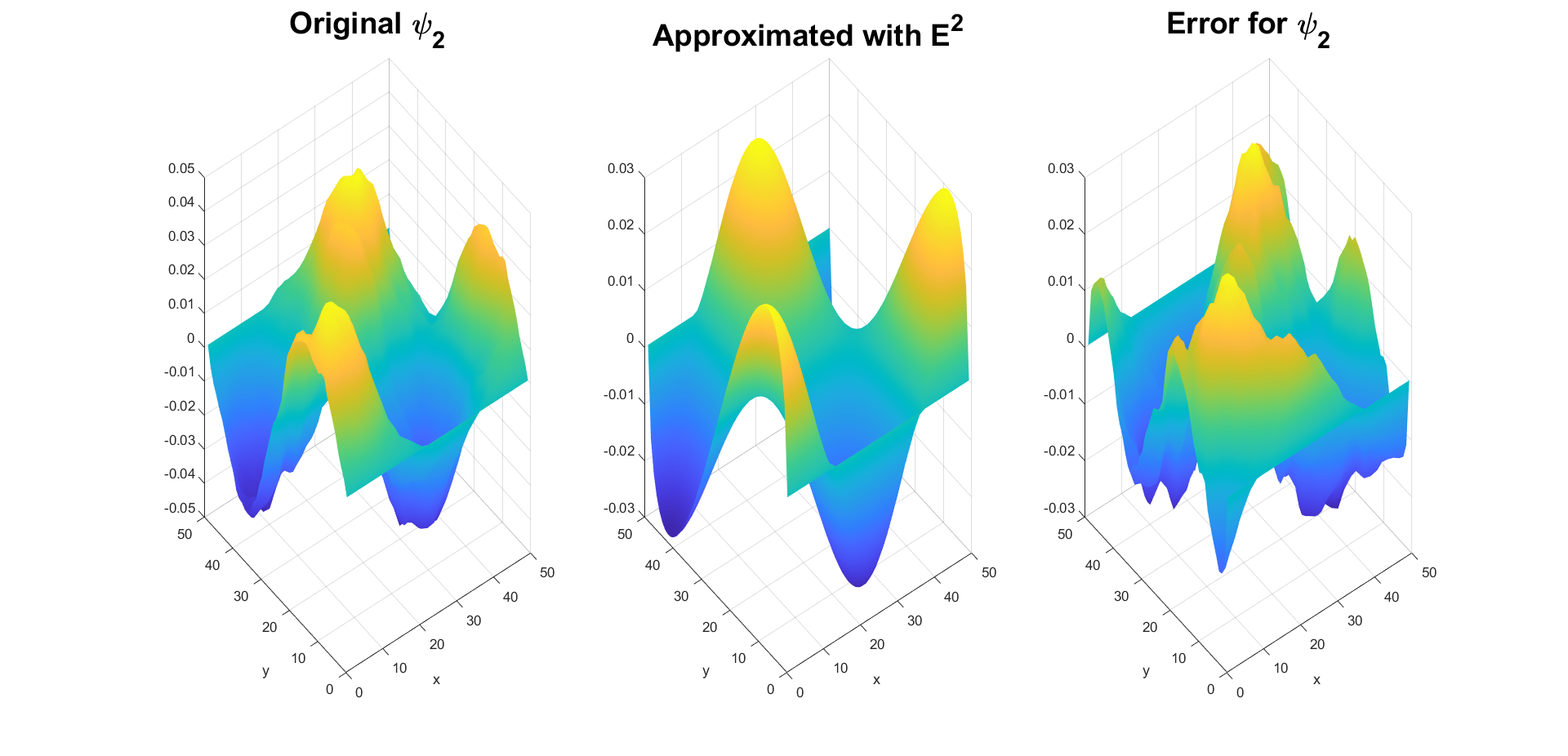} \\
		\includegraphics[width=0.48\textwidth]{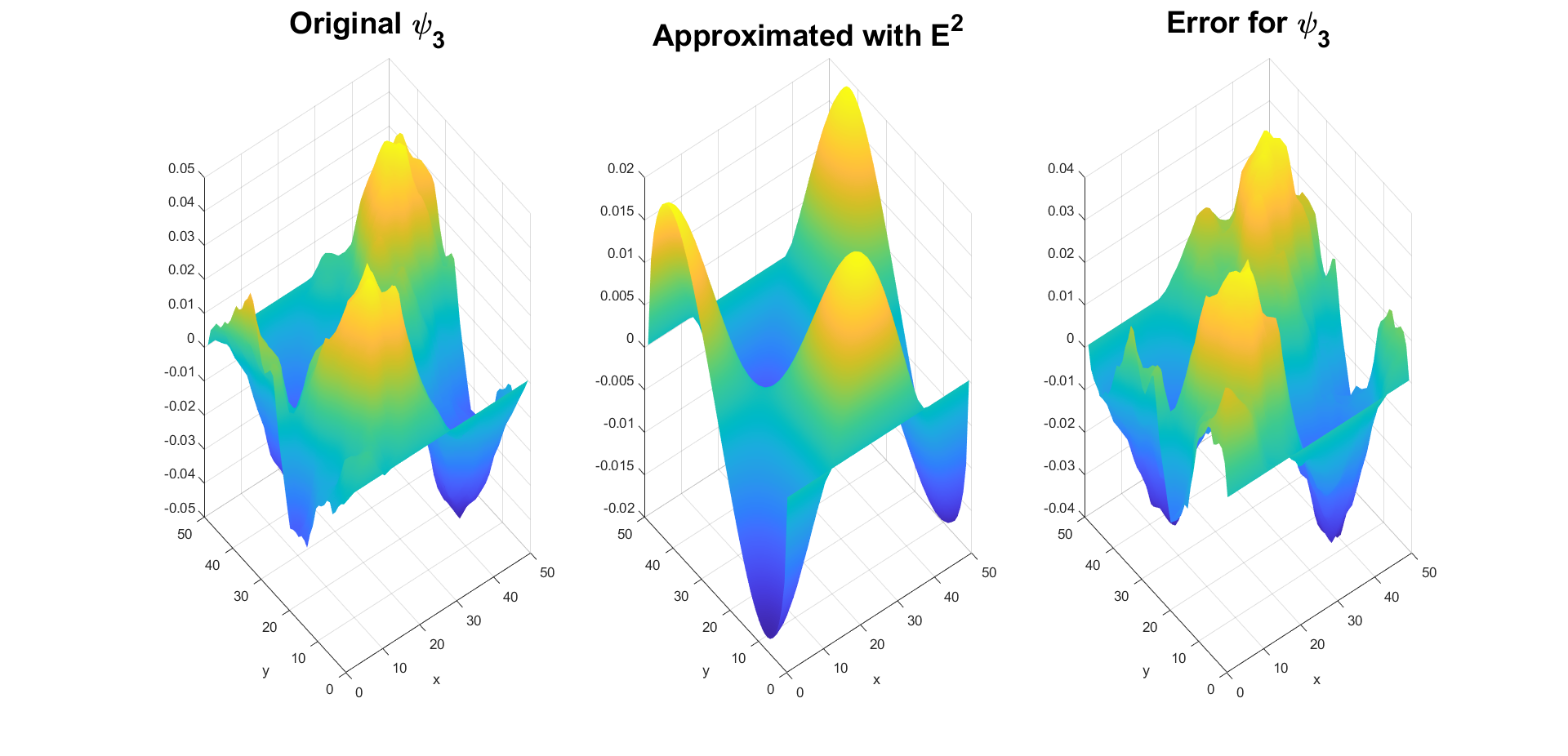}
		\includegraphics[width=0.48\textwidth]{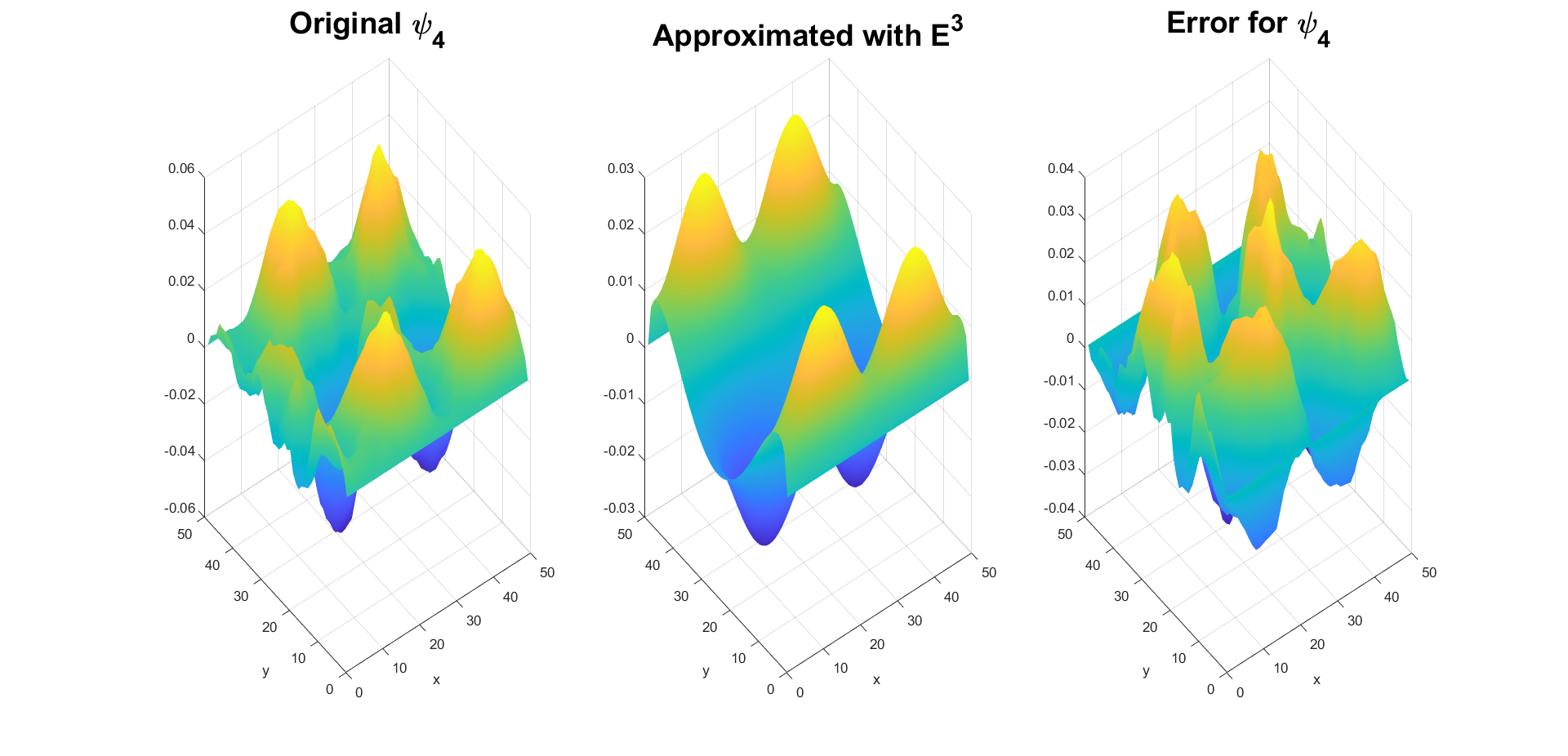}
		\caption{Results of the linear combinations for $\psi_{h}$, with $h=1,2,3,4$.}
		\label{fig:psi_shepp_logan}
	\end{figure}
	The reader can observe, as expected, that the approximation results, expressed as linear combinations, are less precise than those found in \autoref{eq:ex_1}. 
\end{example} 

\subsubsection{Decoding}
Using the results from the previous subsection, we implement the reconstruction formula derived in \autoref{eq:decoder}. 
\begin{description}
	\item{\bf Test 1:}  In the first example the ground truth is given by 
	\begin{equation} \label{eq:test1} 
		\begin{aligned}
			\bx_{true} = -0.9119\bu_{11,5} +0.6527\bu_{3,1} &-0.7343\bu_{6,4}+ 0.5406\bu_{5,3} \\
			&+ 0.9758 \bu_{8,4} -0.1569 \bu_{12,10} + 0.2778  \bu_{7,3} + 0.6395  \bu_{12,2}.
		\end{aligned}
	\end{equation}
	In \autoref{fig:xls} the numerical reconstruction with the decoder \autoref{eq:decoder} is plotted. It provides a reasonable reconstruction.
	\begin{figure}[h!]
		\centering
		\includegraphics[width=0.3\textwidth]{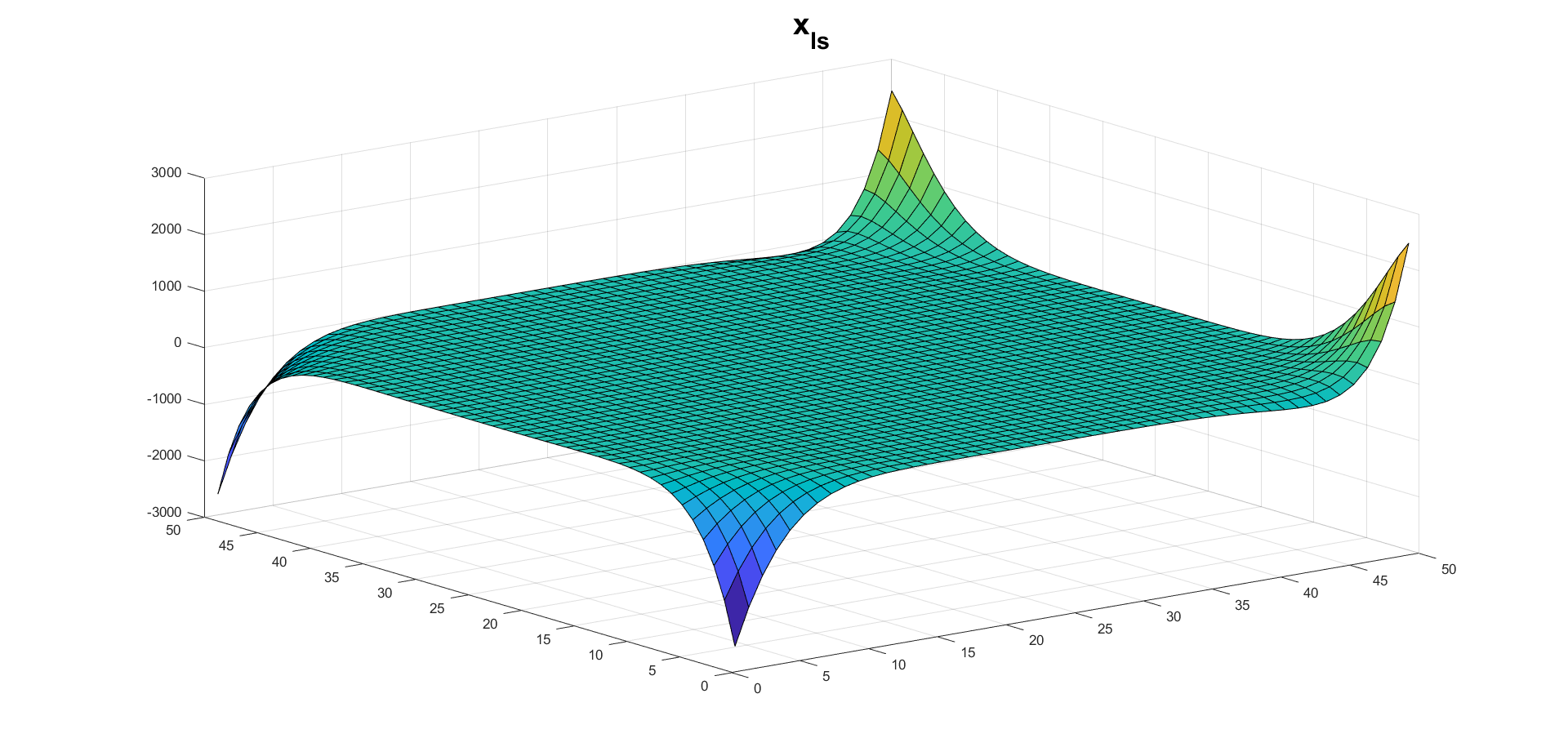} \quad \includegraphics[width=0.3\textwidth]{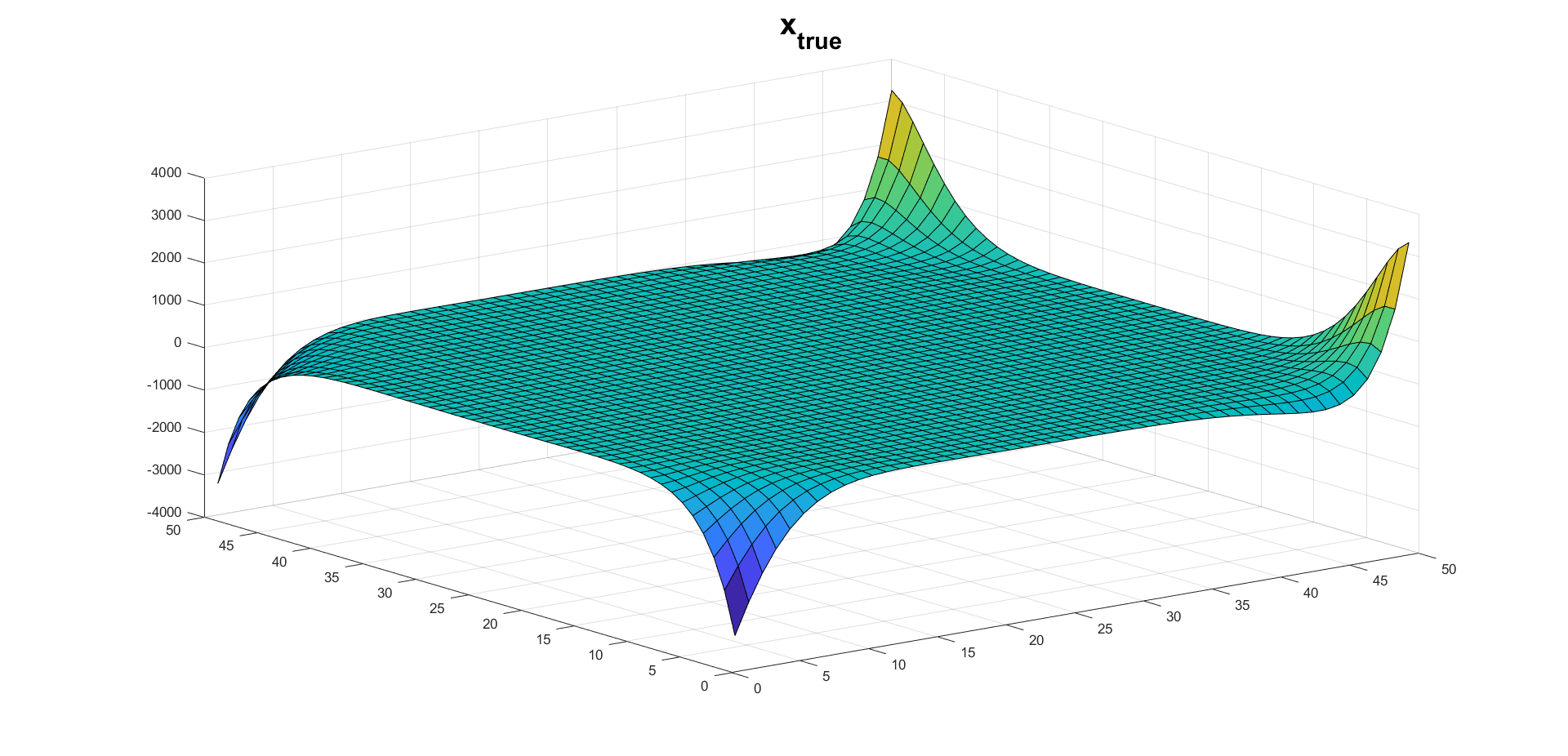} \quad \includegraphics[width=0.3\textwidth]{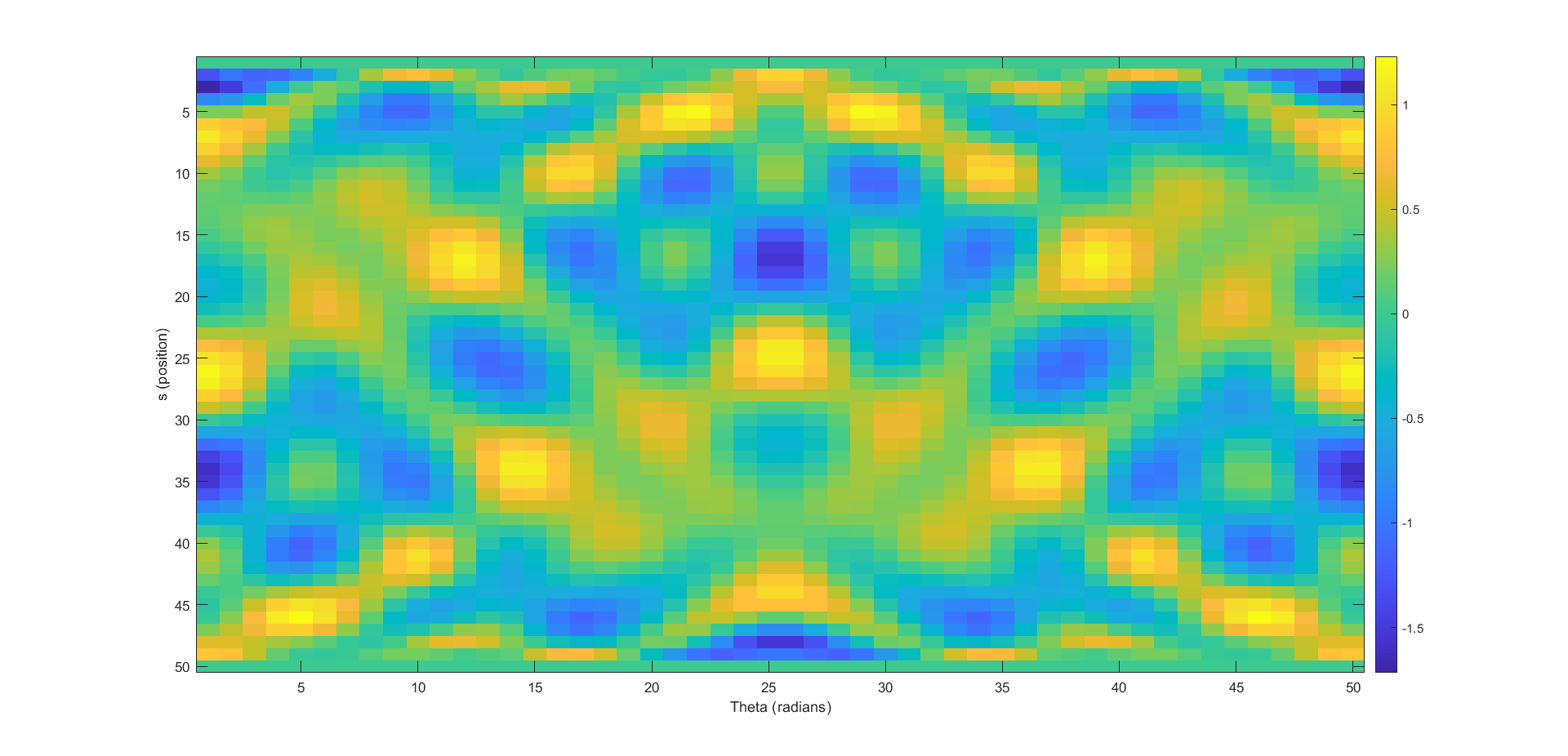}
		\caption{Decoding, ground truth as represented in \autoref{eq:test1} and $y_{true}$.}
		\label{fig:xls}
	\end{figure}
	\item{\textbf{Test 2}}. We introduce a scenario similar to \textbf{Test 1}, but now involving a nonlinear combination of functions $\bu_{k,l}$. Specifically, for each function $\bu_{k,l}$ in the nonlinear combination, we apply the operation $0.1 \bu_{k,l}^2+\e^{\bu_{k,l}/\max(\bu_{k,l})}$, scaled by a specific weight. Results and comparison with ground truth are provided in \autoref{fig:xls_nl2}. As expected, in the nonlinear setting the results are worse.
	\begin{figure}[h!]
		\centering
		\includegraphics[width=0.3\textwidth]{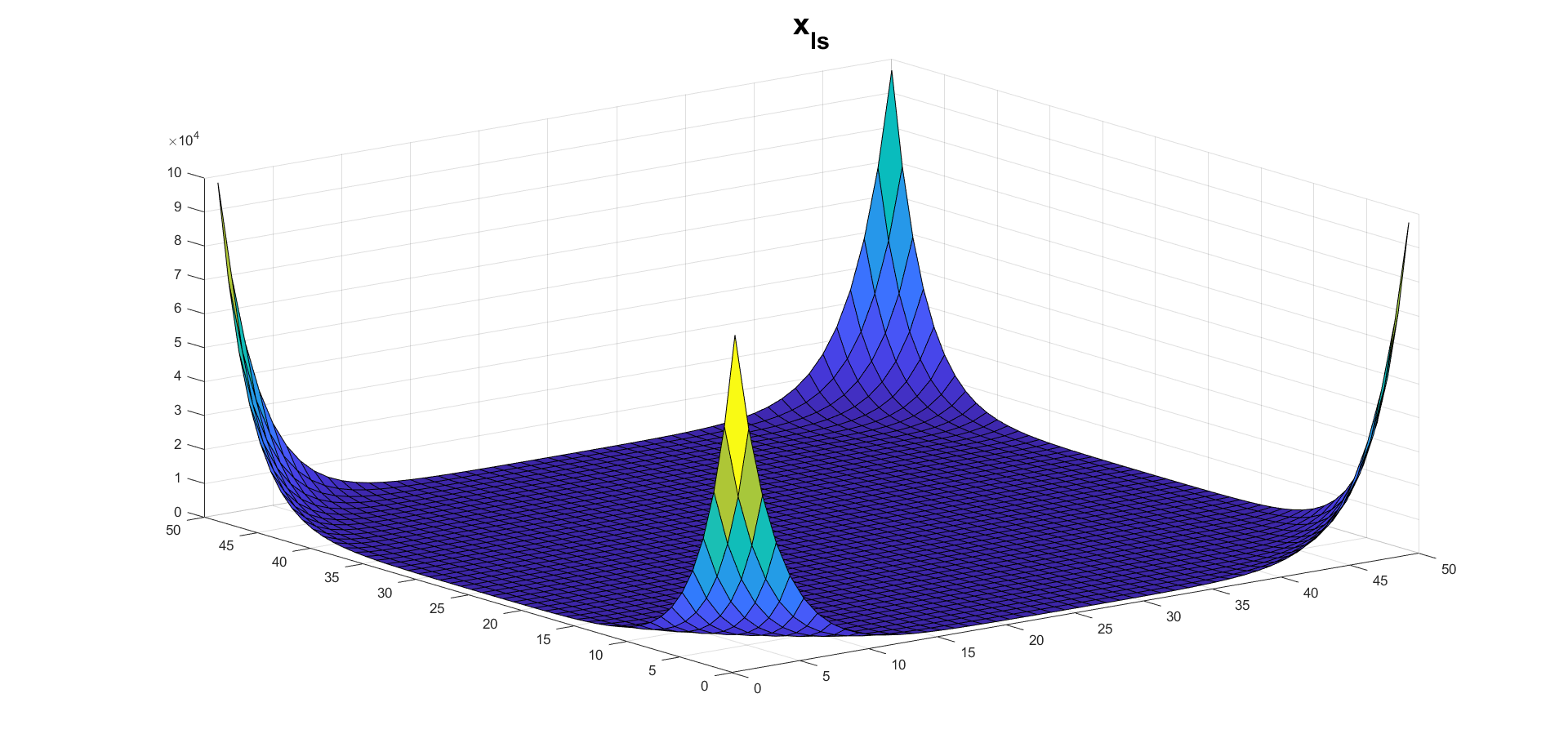} \quad \includegraphics[width=0.3\textwidth]{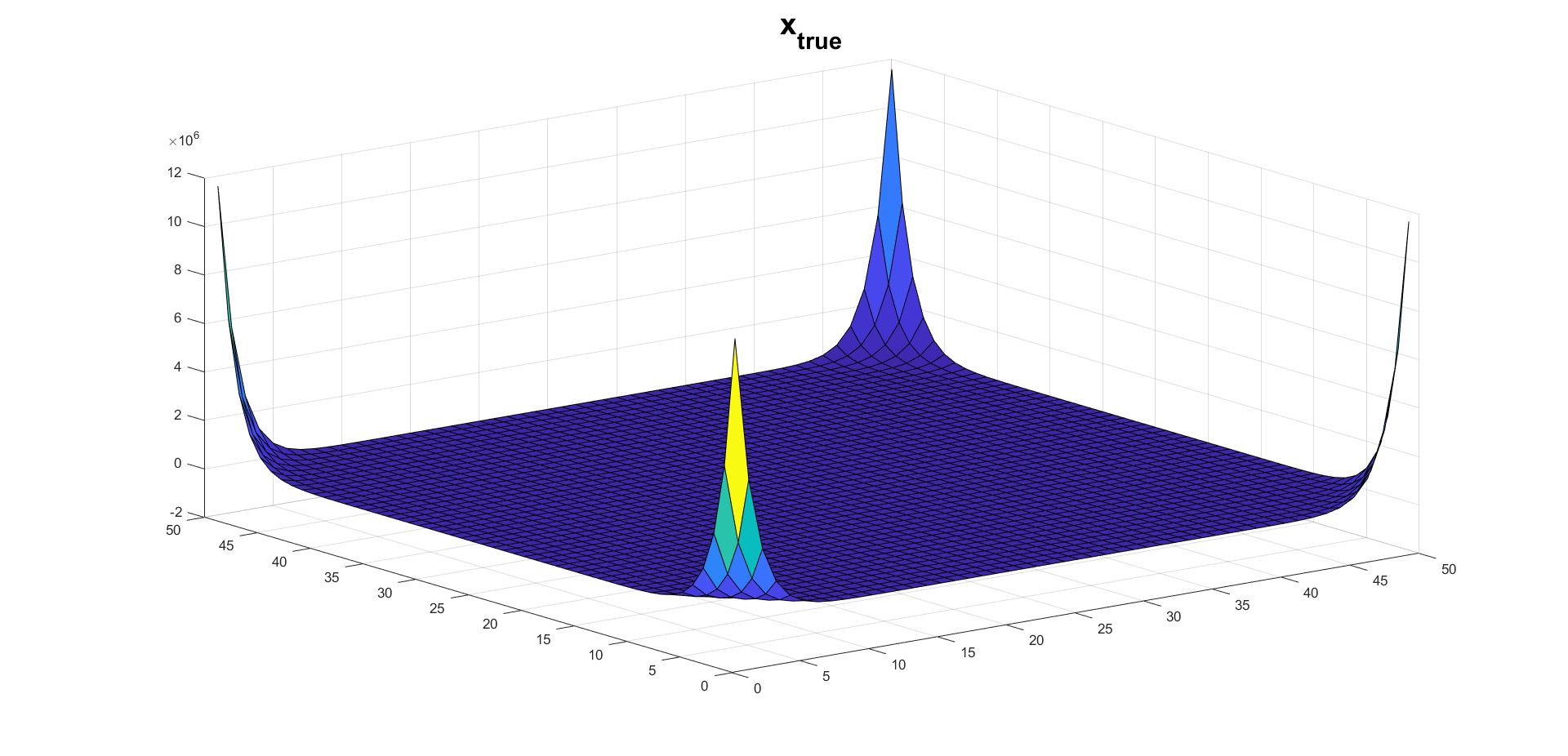} \quad \includegraphics[width=0.3\textwidth]{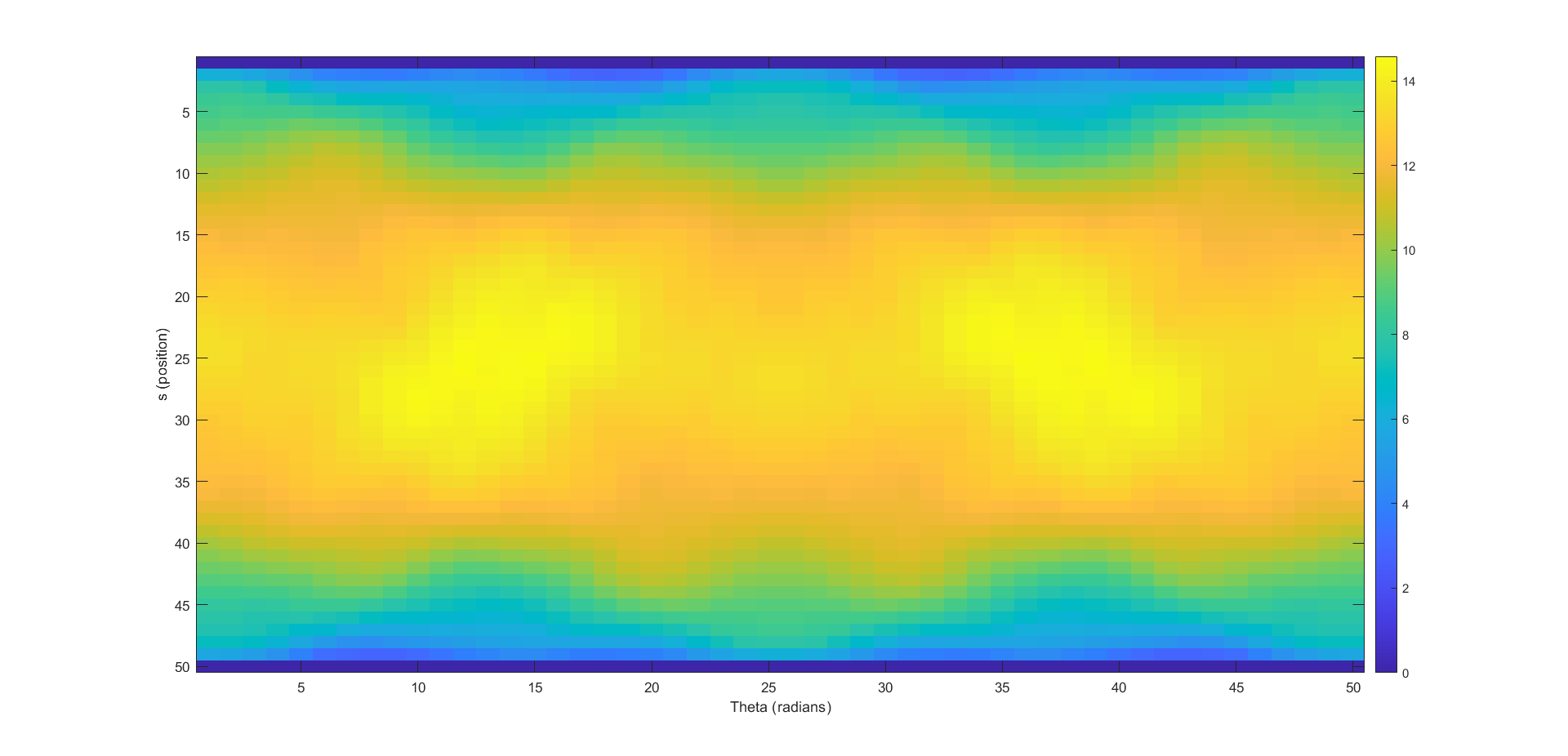}
		\caption{Decoding, ground truth as represented in \autoref{eq:test1} and $y_{true}$ in the nonlinear combination setting.}
		\label{fig:xls_nl2}
	\end{figure}
\end{description}

\section*{Conclusion} In this paper we have shown that a double orthonormalization strategy, consisting first of orthonormalization of training images and second of a principal component analysis of the training data theoretically provides us with the singular value decomposition of a linear operator (without making use of an explicit physical model). Two further interesting aspects are shown: First: Orthonormalization, like Gram-Schmidt, can be expressed as a deep neural network, and this opens up exploiting synergies with de-and encoder strategies. Secondly, not only the singular value decomposition can be implemented purely data driven but also the decoding, meaning that for arbitrary data the minimum-norm solution can be computed data driven.

\appendix
\subsection*{Acknowledgements}
This research was funded in whole, or in part, by the Austrian Science Fund
(FWF) P 34981 -- New Inverse Problems of Super-Resolved Microscopy (NIPSUM).
For the purpose of open access, the author has applied a CC BY public copyright
license to any Author Accepted Manuscript version arising from this submission.
Moreover, OS is supported by the Austrian Science Fund (FWF),
with SFB F68 ``Tomography Across the Scales'', project F6807-N36 (Tomography with Uncertainties).
The financial support by the Austrian Federal Ministry for Digital and Economic
Affairs, the National Foundation for Research, Technology and Development and the Christian Doppler
Research Association is gratefully acknowledged. AA is member of the group GNAMPA 
(Gruppo Nazionale per l’Analisi Matematica, la Probabilità e le loro Applicazioni) of INdAM (Istituto Nazionale di Alta Matematica).
The computational results presented have been achieved in part using the Vienna Scientific Cluster (VSC).

The authors want to thank Martin Rumpf (University of Bonn) for stimulating discussions and feedback.

\section*{References}
\renewcommand{\i}{\ii}
\printbibliography[heading=none]

\end{document}